\documentclass[11pt,oneside]{amsart}

\usepackage{amssymb,amsmath,enumitem,array}
\usepackage{tikz-cd}
\usepackage{xcolor} 
\colorlet{mdtRed}{red!50!black}
\definecolor{dblue}{rgb}{0,0,.6}
\usepackage[colorlinks]{hyperref}
\hypersetup{linkcolor=blue,citecolor=dblue,filecolor=dullmagenta,urlcolor=mdtRed}

\usepackage{multirow} 
\usepackage{booktabs}
\usepackage{geometry}
\newtheorem{theorem}[equation]{Theorem}
\newtheorem{lemma}[equation]{Lemma}
\newtheorem{proposition}[equation]{Proposition}
\newtheorem{corollary}[equation]{Corollary}

\theoremstyle{definition}
\newtheorem{definition}[equation]{Definition}

\newtheorem{example}[equation]{Example}

\usepackage[nosort,nocompress]{cite}

\numberwithin{equation}{section}

\newenvironment{remark}
 { \refstepcounter{equation}
   \medskip\noindent
   \textbf{Remark \thesection.\arabic{equation}.} }
 { \hfill \qedsymbol}

\DeclareMathOperator{\rk}{Rk}

\DeclareMathOperator{\HF}{HF}

\renewcommand{\geq}{\geqslant}
\renewcommand{\leq}{\leqslant}
\everymath{\displaystyle}

\title{On the variety of sum of powers of certain complete intersection forms}

\author[Bhat]{Meghana Bhat}
\address[Meghana Bhat]{College of Computing and Data Science, Nanyang Technological University, 50 Nanyang Ave, Block N 4, Singapore - 639798}
\email{meghana.bhat@ntu.edu.sg}

\author[Sebastian]{Ronnie Sebastian}
\address[Ronnie Sebastian]{Department of Mathematics, Indian Institute of Technology Bombay, Powai, Mumbai - 400076, Maharashtra, India}
\email{ronnie@iitb.ac.in}

\thanks{MB is supported by the SUG Grant WBS 03INS002590C130 funded by NTU Singapore.}

\date{\today}
\subjclass[2020]{Primary: 14N07 ; Secondary:  13D40}
\keywords{Waring rank, Sum of powers decomposition, Apolar sets, Hilbert function, Variety of sums of powers(VSP)}
\begin{document}

\maketitle
\begin{abstract}
Let $F$ be a homogeneous polynomial (form). 
If the annihilator ideal $F^\perp$ is a complete 
intersection, then $F$ is called as a 
complete intersection form. For many complete 
intersection forms, we show that its minimal 
apolar ideal is a complete intersection and 
further we show that ${\rm VSP}(F)$
is irreducible and compute its dimension. 

\end{abstract}

\section{Introduction}
    Let $S=\mathbb{C}[x_0, \ldots, x_n]$ be 
    the standard graded polynomial ring, and 
    let  $F \in S$ be a homogeneous polynomial 
    of degree $d$, also known as a degree $d$ 
    form. It is well known that there exist linear forms
    $L_1, \ldots, L_r \in S_1$ such that 
    \[F=\sum_{i=1}^{r}\alpha_iL_i^d,\]  for 
    some $\alpha_i \in \mathbb{C}$, $1 \leqslant i \leqslant r$. 
    Such a decomposition of $F$ is called a 
    {\it Waring decomposition} or a {\it sum of powers decomposition}.  
    The least integer $r$ such that there exists 
    exactly $r$ summands is  called the {\it Waring rank} 
    of $F$, denoted by $\rk(F)$. 

    We now briefly mention some earlier 
    works related to the results in this article. 
    Given a homogeneous form $F$, computing 
    the Waring rank of $F$ is a well-known problem. 
    J. Alexander and A. Hirschowitz computed the rank of 
    a generic form \cite{AH95}. In view of this result, interest 
    shifted to studying the Waring rank of specific families. 
    A lower bound on the Waring rank of monomials was given in 
    \cite{RS11}. {The precise value of the rank for monomials was 
    computed in \cite{CCG12, BBT13}. Moreover, in 
    \cite{CCG12} the authors solve the Waring 
    problem for a form $F$ which is a sum of pairwise 
    coprime monomials.}  
    Other cases for which the Waring rank has been studied are
    elementary symmetric polynomials in \cite{Lee16}, binary 
    binomials in \cite{BM21}, some reducible forms 
    \cite{BCDM26,CCCGW18, CVG16,CG26}.
    Approaching the computation of the Waring rank of 
forms with the sole information of annihilator ideals is a 
relatively new and different approach. 
In \cite{BCMS26}, authors define a 
\emph{complete intersection form} 
to be a homogeneous polynomial whose annhilator ideal 
(see Section \ref{Sec:Background} for a definition) 
is a complete intersection. They compute the Waring 
rank of complete intersection forms in many cases. 
    Despite the understanding of how to compute the Waring 
    rank in many cases, in general, given a form, computing 
    the Waring rank remains difficult. 
    There are also other notions of rank, cactus
    rank, smoothable rank, border rank, and we refer the interested 
    reader to \cite{RS11} for their definitions and comparisons 
    between them. See also \cite{BB15}, \cite{BD20}. 


A linear form $\ell$ can be represented as a point in the projective
    space $\mathbb P^n$. The variety of sum of powers 
    (see \cite{RS00}) of a given form $F$, with respect to some 
    non-negative integer $r$, is defined as
\[
{\rm VSP}(F,r)=\left\{\{[\ell_1],[\ell_2],\ldots,[\ell_r]\}\in {\rm Hilb}_r(\mathbb{P}^n) \,|\, F=\ell_1^d+\ell_2^d+\ldots+\ell_r^d\right\}.
\]
where ${\rm Hilb}_r(\mathbb{P}^n)$ denotes the 
Hilbert scheme of length $r$ zero-dimensional subschemes 
of $\mathbb{P}^n$. These varieties are of interest, for example see 
\cite{Mukai}, \cite{RS00}, \cite{AR-1}, \cite{RV}, \cite{MM-Birat}. 
It is interesting to understand these varieties when $r$
is the Waring rank of $F$. For ease of notation, we denote 
${\rm VSP}(F,\rk(F))$ by ${\rm VSP}(F)$.
Let $\mathbb{X} \subset \mathbb{P}^n$ be a finite set 
of closed points. We say $\mathbb{X}$ is an \emph{apolar set} to $F$ if 
its reduced ideal satisfies  
$I(\mathbb{X}) \subset F^{\perp}$. Further, $\mathbb{X}$ 
is said to be a \emph{minimal apolar set} if 
$|\mathbb{X}|= \rk(F)$.
In the case when $F$ is a monomial, in \cite{BBT13} the authors 
prove that ${\rm VSP}(F)$ is irreducible, compute its  
dimension and show that every minimal apolar ideal
is a complete intersection. In \cite{BCDM26}, 
the dimension of VSP of certain 
reducible quartic forms and the generators of 
minimal apolar ideals were described.


The study of the Hilbert function of a minimal apolar 
ideal of the given polynomial provides very useful 
information on a lower bound of the Waring rank of 
$F$ (see \cite{CCCGW18,CCG12, BCDM26}). 
In \cite[Question 2.11]{MO20} authors asked that 
for a given $F \in S_d$, if $\mathbb{X}$, 
$\mathbb{X}'$ are minimal apolar sets of $F$, 
then is it true that the Hilbert function of 
$I(\mathbb{X})$ and $I(\mathbb{X}')$ are the same? 
In general, it is not true that every minimal apolar 
set of a given form has the same Hilbert function 
(see \cite{ACO24} for an example that negatively 
answers this question and explores connections of 
the Hilbert function of points with \emph{liaison theory}). 
But there are many forms for which the answer is 
positive. For example: binary forms 
(by Sylvester's algorithm), monomials, etc.  

We now state the main results of this article, which 
are inspired from \cite{BBT13}. Let $T=\mathbb C[X_0,X_1,\ldots,X_n]$.
We say a form $f$ is non-reduced if there are forms $h,g$
with ${\rm deg}(h)\geqslant 1$ such that $f=h^2g$.
\begin{theorem}\label{theorem-intro1}
Let $F^\perp = (f_0, f_1, \ldots,f_n) \subseteq T$ be a complete intersection.
Let $\deg f_i = a_i$ and assume that  $a_0 \leqslant a_1\leqslant \ldots \leqslant a_n$. 
Assume that $f_i$ is non-reduced for some $i$. Let $k$ be the smallest 
such that $f_k$ is non-reduced. Assume that the subscheme 
$V(f_0,f_1,\ldots,f_{k-1})\subset \mathbb P^n$
is reduced.  
Let $J$ be the complete-intersection $(f_0,\ldots,f_{k-1},f_{k+1},\ldots,f_n)$
and let $HF(T/J,-)$ be the Hilbert function of $T/J$. Then ${\rm VSP}(F)$
is irreducible of dimension 
$$\sum_{\substack{i=0\\i\neq k}}^n HF(T/J,a_i-a_k)\,.$$
Moreover, if $\mathbb X\subset \mathbb P^n$ 
is a minimal apolar set to $F$, then
the ideal $I(\mathbb X)$ is a complete intersection of the form 
$$\big(f_i - \psi_if_k \;\big|\; i \in \{0,\ldots,n\}\setminus\{k\}\big)\,,$$
where $\psi_i$ is homogeneous of degree $a_i-a_k$.
\end{theorem}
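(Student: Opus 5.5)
We follow the strategy of \cite{BBT13} for monomials, which is the special case $f_i=X_i^{a_i}$.

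\textbf{Step 1: the rank.} We first recall the known value $\rk(F)=\prod_{i\neq k}a_i$. The lower bound is the standard one for complete intersection forms (\cite{BCMS26}). If $\mathbb X$ is apolar to $F$, then $I(\mathbb X)\subset F^\perp$. Since $f_k=h^2g$ is non-reduced, a general element of $F^\perp$ of degree $a_k$ cannot be used in a regular sequence inside a radical ideal. One then argues, via the Hilbert function of $T/(I(\mathbb X)+(\ell))$ for a general linear form $\ell$ (the $\ell$ that is not a zero divisor), that $|\mathbb X|\geqslant \deg V(J)=\prod_{i\neq k}a_i$. The upper bound comes from Step 3.

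\textbf{Step 2: structure of a minimal apolar ideal.} Let $\mathbb X$ be minimal apolar and $I=I(\mathbb X)\subset F^\perp$. Write elements of $I$ in terms of the generators $f_0,\dots,f_n$. The key claim is that $I$ contains elements $g_i=f_i-\psi_if_k$ for all $i\neq k$. We prove this by induction on $i$ in increasing degree.

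For $i<k$, we use that $V(f_0,\dots,f_{k-1})$ is reduced, together with a degree count. It shows that $(I)_{\leqslant a_i}$ must agree with $(F^\perp)_{\leqslant a_i}$ modulo multiples of $f_k$. Comparing Hilbert functions, $HF(T/I,t)\leqslant HF(T/F^\perp,t)+\dots$, forces $f_i$ itself, up to adding $f_k$-multiples, to lie in $I$.

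For $i>k$, we need an element of $I$ of degree $a_i$ whose image in $F^\perp/(f_0,\dots,f_{i-1},f_k)$ is a nonzero multiple of $f_i$. Here minimality of $|\mathbb X|$ is used. If no such element existed, $I$ would be contained in $(f_0,\dots,\widehat{f_i},\dots,f_n)+(\text{higher})$ in a way that forces $\deg\mathbb X>\prod_{i\neq k}a_i$. The residual element is also constrained: $I$ radical and $f_k$ non-reduced imply that $f_k$, together with the others, cannot be part of the generating regular sequence.

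Once the $g_i$ lie in $I$, the ideal $(g_i)_{i\neq k}$ is a complete intersection of $n$ forms with degree $\prod_{i\neq k}a_i=|\mathbb X|$. Since it is contained in $I$ and $I$ is saturated, we get $I=(g_i)_{i\neq k}$ by degree comparison. This is the main obstacle: proving that $f_k$ is the unique generator that gets ``dropped''. We would try to deduce it from non-reducedness by showing that if some $f_j$ with $j\neq k$ were dropped instead, the resulting ideal would contain $f_k$ modulo lower generators and fail to be radical at points of $V(h)$.

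\textbf{Step 3: parametrization and dimension.} Conversely, consider any choice of $\psi=(\psi_i)_{i\neq k}$ with $\deg\psi_i=a_i-a_k$. The ideal $I_\psi=(f_i-\psi_if_k)$ lies in $F^\perp$. For general $\psi$ it defines a reduced complete intersection, by a Bertini-type argument using that $V(f_0,\dots,f_{k-1})$ is reduced. This gives the upper bound on rank and an open dense family of minimal apolar sets. Two choices $\psi,\psi'$ give the same ideal iff $\psi_i-\psi'_i\in J$ in degree $a_i-a_k$, since $f_k$ is a nonzerodivisor modulo $J$. So the parameter space is an open subset of $\prod_{i\neq k}(T/J)_{a_i-a_k}$. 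It is irreducible, and its dimension is $\sum_{i\neq k}HF(T/J,a_i-a_k)$. The map to ${\rm Hilb}_r(\mathbb P^n)$ is injective, and by Step 2 its image is all of ${\rm VSP}(F)$, so ${\rm VSP}(F)$ is irreducible of the stated dimension.
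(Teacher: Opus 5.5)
\paragraph{Main gap: Step 2.} Step 2 carries the whole ``Moreover'' statement, and Step 3 needs it for surjectivity onto the minimal apolar sets, but it is not proved. Your arguments for $i<k$ and $i>k$ are placeholders (``$HF(T/I,t)\leqslant HF(T/F^\perp,t)+\dots$'', ``in a way that forces'', ``we would try''). You also flag the central point, that $f_k$ is the generator that gets dropped, without resolving it.

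What is missing is a way to use that $|\mathbb X|$ \emph{attains} the lower bound, and for that you need the right bound. A general linear form $\ell$ does not give it: the bound it yields is $\dim_{\mathbb C}T/(F^\perp+(\ell))$, which is at most $\prod_{i<n}a_i$. That is too small when $a_k<a_n$; for $F=xyz^3$ it gives $4$, not $8$.

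The rank is computed by $h$, where $f_k=h^2g$. Since $F^\perp:(h)+(h)=J+(h)$, one has $\deg(h)\,|\mathbb X|\geqslant\dim_{\mathbb C}T/(J+(h))=\deg(h)\prod_{i\neq k}a_i$. Equality for a minimal apolar set gives, by \cite[Proposition 3.6]{CCCGW18}, $I(\mathbb X):(h)=I(\mathbb X)$ and $I(\mathbb X)+(h)=J+(h)$. From this the paper deduces, in order:
\begin{enumerate}
\item $\HF(T/I(\mathbb X),-)=\HF(T/J,-)$, by comparing Hilbert series;
\item $I(\mathbb X)+(h^2)=J+(h^2)$, by squeezing Hilbert functions;
\item $I(\mathbb X)+(f_k)=F^\perp$, using $F^\perp:(h^2)=J+(g)$ and a minimal-degree (graded Nakayama) argument.
\end{enumerate}
Only this lets you lift each $f_i$, $i\neq k$, to an element $f_i-\phi_if_k\in I(\mathbb X)$. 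After that, your closing degree comparison is fine. Note also that the reducedness of $V(f_0,\dots,f_{k-1})$ is not what produces the generators with $i<k$. In the paper it enters only through the rank formula of \cite{BCMS26}, i.e.\ through knowing that the $h$-bound is attained.

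\paragraph{Secondary error: the fibres in Step 3.} Your criterion that $I_\psi=I_{\psi'}$ iff $\psi_i-\psi'_i\in J_{a_i-a_k}$ is false. Take $F=xyz^3$, so $F^\perp=(X^2,Y^2,Z^4)$, $k=0$ and $J=(Y^2,Z^4)$.
\begin{itemize}
\item With $\psi=(1,\psi_2)$ and $\psi'=(1,\psi_2+Y^2)$, the difference lies in $J_2$. Yet $I_{\psi'}=(Y^2-X^2,\,Z^4-\psi_2X^2-X^4)$ differs from $I_\psi=(Y^2-X^2,\,Z^4-\psi_2X^2)$, because $X^4\notin I_\psi$ (restrict to $Y=X$ and look at the coefficient of $Z^4$).
\item Conversely, $\psi'=(1,\psi_2+Y^2-X^2)$ gives the same ideal as $\psi$, although $Y^2-X^2\notin J$.
\end{itemize}
So there is no well-defined map out of $\prod_{i\neq k}(T/J)_{a_i-a_k}$.

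The correct criterion is $\psi'_i-\psi_i\in(I_\psi)_{a_i-a_k}$ for all $i\neq k$. This holds because $f_k$ is a nonzerodivisor modulo $I_\psi$, not merely modulo $J$. It also uses that two complete intersections with the same generator degrees, one contained in the other, coincide. Since $I_\psi$ and $J$ have the same Hilbert function, the fibres are affine spaces of dimension $\sum_{i\neq k}\dim J_{a_i-a_k}$. Hence your final dimension count is still correct. The paper instead gets an honest bijection: it chooses complements $T_d=J_d\oplus C_d$ with $f_kC_{d-a_k}\subset C_d$ and proves existence and uniqueness of a normal form with $\psi_i\in C_{a_i-a_k}$.
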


\begin{theorem}\label{theorem-intro2}
Let $F^\perp=(f_0,f_1,\ldots,f_n)$ be a complete intersection. 
Let $a_i={\rm deg}(f_i)$ and assume that $a_0\leqslant a_1\leqslant \ldots\leqslant a_n$. 
Further assume that $a_n\geqslant \left(\sum\nolimits_{i=0}^{n-1}a_i\right)-n+2$
and that the subscheme $V(f_0,f_1,\ldots,f_{n-1})\subset \mathbb P^n$
is reduced. 
Let $J$ be the complete-intersection $(f_0,\ldots,f_{n-1})$
and let $HF(T/J,-)$ be the Hilbert function of $T/J$. Then ${\rm VSP}(F)$
is irreducible of dimension 
$$\sum_{\substack{i=0}}^{n-1} HF(T/J,a_i-a_n)\,.$$    
Moreover, if $\mathbb X\subset \mathbb P^n$ 
is a minimal apolar set to $F$, then
the ideal $I(\mathbb X)$ is a complete intersection of the form 
$$\big(f_i - \psi_if_n \;\big|\; i \in \{0,\ldots,n-1\}\big)\,,$$
where $\psi_i$ is homogeneous of degree $a_i-a_n$. 
\end{theorem}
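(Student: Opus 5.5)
We prove Theorem \ref{theorem-intro2} by following the strategy of Theorem \ref{theorem-intro1}, with the degree hypothesis on $a_n$ standing in for non-reducedness. From \cite{BCMS26} we take the rank of such complete intersection forms: under these hypotheses, $\rk(F)=a_0a_1\cdots a_{n-1}=\deg V(f_0,\ldots,f_{n-1})$. Any ideal of the form $I=(f_i-\psi_if_n\mid 0\le i\le n-1)$ is contained in $F^\perp$. For general $\psi_i$ it is a complete intersection of $n$ forms of degrees $a_0,\ldots,a_{n-1}$, so it cuts out exactly $\prod_{i<n}a_i$ points counted with multiplicity.

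First we show that a general such $I$ is radical. The family $\{(f_i-\psi_if_n)\}$ contains $\psi=0$, where $V(f_0,\ldots,f_{n-1})$ is reduced by assumption. Reducedness of a finite complete intersection is an open condition in flat families, so a nonempty open subset of the affine space of tuples $(\psi_0,\ldots,\psi_{n-1})$ gives reduced schemes of length $\rk(F)$. Each of these is a minimal apolar set. This produces the parametrization.

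Next, and this is the main obstacle, we show that every minimal apolar set arises this way. Let $\mathbb X$ be minimal apolar, so $|\mathbb X|=\prod_{i<n}a_i$ and $I(\mathbb X)\subset F^\perp$. We compare Hilbert functions in low degrees. Since $F^\perp$ agrees with $J=(f_0,\ldots,f_{n-1})$ in degrees $<a_n$, we get $I(\mathbb X)_t\subseteq J_t$ for $t<a_n$. The set $\mathbb X$ has the same degree as $V(J)$, and $V(J)$ is a complete intersection whose Hilbert function stabilizes at $\sum_{i<n}(a_i-1)+1=\sum_{i<n} a_i-n+1$. The hypothesis $a_n\ge\sum_{i<n}a_i-n+2$ then guarantees that in degree $t=a_n-1$ we have $HF(T/J,t)=\deg\mathbb X\ge HF(T/I(\mathbb X),t)$. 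This forces $I(\mathbb X)_t=J_t$ for all $t\le a_n-1$.

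To make this work we need every generator of $J$ to lie in degree $\le a_n-1$, which requires $a_{n-1}<a_n$. The equal-degree case needs separate care: there $f_n$ mixes with the other $f_i$, and we handle it by a change of generators. In degree $a_i$ we then write each generator of $I(\mathbb X)$ as an element of $F^\perp$, giving the form $f_i-\psi_if_n$ after absorbing the $J$-part. The resulting ideal $I'$ is contained in $I(\mathbb X)$, is a complete intersection of the same degree (its zero locus is finite since it is contained in $\mathbb X$), and is saturated. Hence $I'=I(\mathbb X)$.

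Finally we count the dimension. Two tuples $\psi$ and $\psi'$ give the same ideal exactly when $\psi_i-\psi'_i\in J$ in the appropriate degree, by a Koszul/regular-sequence argument using that $f_n$ is a nonzerodivisor modulo $J$. So the parameter space is the open set computed above inside $\prod_{i<n}(T/J)_{a_i-a_n}$. Irreducibility follows because this is an open subset of an affine space, and the dimension is $\sum_{i<n}HF(T/J,a_i-a_n)$. All but the top term vanish unless the degrees are equal, which matches the formula.
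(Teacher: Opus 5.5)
Your argument is correct in substance, but it takes a genuinely different and much more elementary route than the paper's.

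\textbf{The paper's route.} It first uses Castelnuovo--Mumford regularity (Proposition \ref{prop a_n large}) to replace $f_n$ by a generator $\ell^2g$ with $\ell$ linear, so that $k=n$ in Theorem \ref{theorem-intro1}. It then runs the whole machinery of that theorem: $e$-computability from \cite{CCCGW18}, Proposition \ref{prop:hilbert_eq}, Lemma \ref{Lem:F^{perp}=I+h^2g}, Theorem \ref{vspthm-unified}, and the complements $C_d$ of Theorem \ref{Thm:VSP-main}.

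\textbf{Your route.} You only compare Hilbert functions in degree $a_n-1$. Since $(F^\perp)_t=J_t$ for $t<a_n$ and $a_n-1\geq\sum_{i<n}(a_i-1)$, every finite reduced apolar set satisfies $|\mathbb X|\geq\HF(T/I(\mathbb X),a_n-1)\geq\HF(T/J,a_n-1)=\prod_{i<n}a_i$. For minimal $\mathbb X$, equality gives $I(\mathbb X)_{a_n-1}=J_{a_n-1}$. Saturation of $I(\mathbb X)$ then puts each $f_i$ with $i<n$ into $I(\mathbb X)$ (using $a_{n-1}<a_n$, see below). Hence $\mathbb X\subseteq V(J)$, and counting points gives $I(\mathbb X)=J$.

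\textbf{Two corrections.}
\begin{enumerate}
\item The same inequality already proves $\rk(F)=\prod_{i<n}a_i$, with $V(J)$ attaining it. Use it instead of citing \cite{BCMS26}: Theorem \ref{thm2} requires a non-reduced generator, which you never produce.
\item The hypothesis forces $a_{n-1}<a_n$. Since each $a_i\geq 1$, we get $a_n\geq a_{n-1}+\bigl(\sum_{i=0}^{n-2}a_i-(n-1)\bigr)+1\geq a_{n-1}+1$. So every $\psi_i$ has negative degree and is $0$. Thus $V(J)$ is the unique minimal apolar set, ${\rm VSP}(F)$ is a single point, and \emph{every} term $\HF(T/J,a_i-a_n)$ vanishes, not all but the top one. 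Three parts of your proposal are therefore vacuous and should be dropped: the equal-degree \emph{change of generators}, the openness-in-families argument, and the unproved claim that $\psi,\psi'$ give the same ideal iff $\psi_i-\psi'_i\in J$. In general that last claim would need the care of Lemmas \ref{unique in C_i} and \ref{lem:existence-reduced-form}.
\end{enumerate}

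\textbf{What each route buys.} Yours is self-contained: no regularity, no $e$-computability, and the rank is proved rather than quoted. It also works under a slightly weaker hypothesis, since $a_n-1\geq\sum_{i<n}(a_i-1)$ together with $a_{n-1}<a_n$ suffices. The paper's route gives uniformity with Theorem \ref{theorem-intro1}, plus the structural fact that the top generator of $F^\perp$ can be chosen non-reduced.
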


In Example \ref{counterexample} we show that the assumption 
$V(f_0,f_1,\ldots,f_{k-1})\subset \mathbb P^n$
is reduced is necessary in Theorem \ref{theorem-intro1}.

Our results are obtained by using 
a combination of methods from \cite{CCCGW18} and 
\cite{BBT13}. 
It is interesting to note that many of the complete intersection 
forms share common behaviour with monomials 
(in terms of Waring rank, VSP, minimal apolar ideals).

\noindent {\bf Acknowledgements}:  
{MB 
is grateful to Ananthnarayan H., Enrico Carlini and Shreedevi Masuti for 
their encouragement. She thanks RS and 
IIT Bombay for hospitality, where the project was 
initiated during her visit.}

\section{Preliminaries}\label{Sec:Background}
A useful tool for studying Waring rank problems is 
the apolarity action. Recall the notation 
$T=\mathbb{C}[X_0, \ldots, X_n]$ is the polynomial 
ring over $\mathbb{C}$. The apolarity action of $T$ on 
$S=\mathbb C[x_0,\ldots,x_n]$ is defined on monomials by
\[X_0^{a_0} \cdots X_n^{a_n} \circ (x_0^{b_0}\cdots x_n^{b_n}) = 
\frac{\partial^{a_0}}{\partial x_0^{a_0}}\cdots \frac{\partial^{a_n}}{\partial x_n^{a_n}}(x_0^{b_0}\cdots x_n^{b_n}),\]
and then extended linearly on $S$. 
Given a homogeneous form $F \in S_d$, using the 
apolarity action, we write annihilator ideal of $F$ as 
$$F^{\perp} = \{ g \in T \mid g \circ F = 0\}\,.$$ 
Note that $F^{\perp}$ is an Artinian Gorenstein ideal. 
Given a Waring decomposition 
$$F = \sum_{i=1}^{r}\alpha_iL_i^d\,,$$ 
with $L_i = \sum\nolimits_{j=0}^{n}a_{ij}x_j$, we associate points 
$P_i \in \mathbb{P}^n$ with $L_i$ for all 
$1 \leqslant i \leqslant r$ given by 
$P_i = [a_{i0}: \cdots:a_{in}]$. The following Lemma 
describes the connection between apolarity and Waring problems.  
We refer the reader to \cite[Lemma 1.15]{IK99} for a proof.
\begin{lemma}[Apolarity Lemma]\label{Lemma:Apolarity}
Let $\mathbb{X}=\{P_1, \ldots, P_r\} \subset \mathbb{P}^n$ be 
a set of points. Then for some $\alpha_i \in \mathbb{C}$, we 
have $F = \sum\nolimits_{i=1}^r\alpha_i L_i^d$ if and only if 
$I(\mathbb{X}) \subset F^{\perp}$, where $L_i$ is the linear 
form corresponding to the point $P_i$ for $1 \leqslant i \leqslant r$.
\end{lemma}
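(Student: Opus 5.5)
The plan is to reduce everything to degree $d$ using the perfect pairing $T_d\times S_d\to\mathbb C$, $(g,G)\mapsto g\circ G$, together with the evaluation identity for powers of linear forms. First I would check on monomials that for $g\in T_e$ with $e\leqslant d$ and $L=\sum_j a_jx_j$ corresponding to $P=[a_0:\cdots:a_n]$,
$$g\circ L^d=\frac{d!}{(d-e)!}\,g(a_0,\ldots,a_n)\,L^{d-e}\,,$$
while $g\circ L^d=0$ when $e>d$ for degree reasons. This follows from the chain rule, since each $X_j$ acts on $L^m$ as multiplication by $m\,a_j$ followed by lowering the exponent, and it extends linearly in $g$. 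Note that the vanishing of $g(P)$ does not depend on the chosen representative of $P$.

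\emph{Forward direction.} Suppose $F=\sum_i\alpha_iL_i^d$ and let $g\in I(\mathbb X)$ be homogeneous of degree $e$. If $e>d$, then $g\circ F=0$ trivially. If $e\leqslant d$, the identity gives $g\circ F=\sum_i\alpha_i\frac{d!}{(d-e)!}g(P_i)L_i^{d-e}=0$, because $g(P_i)=0$ for every $i$. Since $I(\mathbb X)$ is homogeneous, this shows $I(\mathbb X)\subset F^\perp$.

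\emph{Converse.} Suppose $I(\mathbb X)\subset F^\perp$. Let $W=\langle L_1^d,\ldots,L_r^d\rangle\subset S_d$. By the identity, the annihilator of $W$ in $T_d$ under the pairing is $\{g\in T_d: g(P_i)=0\ \forall i\}=I(\mathbb X)_d$. The apolarity pairing in degree $d$ is perfect, since distinct monomials pair to zero and a monomial pairs with itself to a nonzero constant. Hence $W=(I(\mathbb X)_d)^{\perp}$ in $S_d$. The hypothesis $I(\mathbb X)_d\subset F^\perp_d$ then says $F\in (I(\mathbb X)_d)^\perp=W$, which is exactly $F=\sum_i\alpha_iL_i^d$ for some $\alpha_i$.

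In this direction only the degree-$d$ part of the hypothesis is used. Conversely, degree $d$ already controls lower degrees: if $g\in I(\mathbb X)_e$ with $e<d$, then for every $h\in T_{d-e}$ we have $hg\in I(\mathbb X)_d$, so $h\circ(g\circ F)=0$. Perfectness in degree $d-e$ then forces $g\circ F=0$.

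The only step needing genuine care is identifying $(I(\mathbb X)_d)^\perp$ with the span $W$. This rests on perfectness of the pairing, and I expect it to be the main, though mild, obstacle. Once it is in place, the rest is the direct computation above.
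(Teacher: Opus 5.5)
Your proof is correct. The three ingredients are all sound: the identity $g\circ L^d=\frac{d!}{(d-e)!}\,g(P)\,L^{d-e}$, perfectness of the degree-$d$ pairing (which gives $(I(\mathbb X)_d)^\perp=\langle L_1^d,\ldots,L_r^d\rangle$), and the reduction of $I(\mathbb X)\subset F^\perp$ to degree $d$. The paper does not prove this lemma itself but cites \cite[Lemma 1.15]{IK99}, and your argument is essentially the standard proof found there, up to normalization conventions.
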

    
Let {$\mathbb{X} \subset \mathbb{P}^n$} be a set of points. 
Then $\mathbb{X}$ is  said to be apolar to $F$ if 
$I(\mathbb{X}) \subset F^{\perp}$, and we call $I(\mathbb{X})$ 
an {\it apolar ideal} of $F$.  Further, $\mathbb{X}$ is a {\it minimal apolar 
set} of $F$ if $|\mathbb{X}| = \rk(F)$.

We recall the following result from \cite{BCMS26}.  

\begin{theorem}[\citenum{BCMS26}, Theorem 3.4]\label{thm2}
    Let $F^\perp = (f_0, f_1, \ldots,f_n) \subseteq T$ be a complete intersection.
    Let $\deg f_i = a_i$ and assume that  $a_0 \leqslant a_1\leqslant \ldots \leqslant a_n$. 
    Assume that $f_i$ is non-reduced for some $i$. Let $k$ be the smallest 
    such that $f_k$ is not reduced and assume that the subscheme 
    $V(f_0,f_1,\ldots,f_{k-1})\subset \mathbb P^n$
    is reduced. Then $\rk(F)=\frac{1}{a_k}\prod\nolimits_{i=0}^n a_i$.
\end{theorem}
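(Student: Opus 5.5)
The plan is to prove the two inequalities $\rk(F)\leqslant \frac{1}{a_k}\prod_i a_i$ and $\rk(F)\geqslant \frac{1}{a_k}\prod_i a_i$ separately. Via the Apolarity Lemma \ref{Lemma:Apolarity}, both become statements about reduced zero-dimensional schemes $\mathbb X$ with $I(\mathbb X)\subset F^\perp$.

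\emph{Upper bound.} I would exhibit an explicit apolar set. For $i\neq k$, choose general forms $\psi_i$ of degree $a_i-a_k$, setting $\psi_i=0$ for $i<k$ since then $a_i-a_k<0$ unless $a_i=a_k$. Consider
$$I=\big(f_0,\ldots,f_{k-1},\,f_{k+1}-\psi_{k+1}f_k,\ldots,f_n-\psi_nf_k\big)\subset F^\perp .$$
The following steps would establish the bound.
\begin{enumerate}
\item[(a)] $V(I)\cap V(f_k)=V(f_0,\ldots,f_n)=\emptyset$, because $F^\perp$ is Artinian. Hence $V(I)$ is zero-dimensional, $I$ is a complete intersection, and its degree is $\prod_{i\neq k}a_i=\frac{1}{a_k}\prod_i a_i$.
\item[(b)] $V(I)$ is reduced. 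I would argue inductively. Start from the reduced scheme $V(f_0,\ldots,f_{k-1})$ of pure dimension $n-k$. Cut successively by general members of the linear systems $\{f_j-\psi f_k\mid \psi\in T_{a_j-a_k}\}$, whose base locus is contained in $V(f_j,f_k)$. Bertini's theorem says a general member cuts the smooth locus, away from the base locus, in a reduced scheme. For dimension reasons, general choices also avoid the lower-dimensional singular locus at the final (zero-dimensional) stage. By (a), the final points avoid $V(f_k)$, hence avoid all base loci.
\end{enumerate}
Together these give a reduced apolar set of cardinality $\frac1{a_k}\prod a_i$.

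\emph{Lower bound.} Write $f_k=h^2g$ with $\deg h\geqslant1$, and let $\mathbb X$ be any reduced apolar set. I would follow the Carlini--Catalisano--Geramita strategy.
\begin{enumerate}
\item[(c)] Pass to $\mathbb Y=\mathbb X\setminus V(h)$. Then $I(\mathbb Y)=I(\mathbb X):h\subset F^\perp:h$.
\item[(d)] Compute $F^\perp:h=(f_0,\ldots,f_{k-1},hg,f_{k+1},\ldots,f_n)$. This uses that $h$ is a nonzerodivisor modulo the remaining $f_i$, which holds by minimality of $k$: no earlier $f_i$ shares the factor $h$.
\item[(e)] Bound $|\mathbb Y|$ below by the Hilbert function argument: take a general linear form $\ell$ not vanishing on $\mathbb Y$, so $\ell$ is a nonzerodivisor on $T/I(\mathbb Y)$. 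Then
$$|\mathbb Y|\geqslant \sum_{t}\HF\big(T/(I(\mathbb Y)+(\ell)),t\big)\geqslant \sum_t \HF\big(T/(F^\perp:h+(\ell)),t\big),$$
where the sum is truncated at the appropriate degree.
\item[(f)] Show that this truncated sum equals $\frac1{a_k}\prod_i a_i$, the length of $T/(f_i\ (i\neq k))$ restricted to a general hyperplane section, summed up to the relevant degree. Non-reducedness of $f_k$ enters here: because $f_k=h^2g$, the residual generator $hg$ still lies in $(h)$, so the colon can be iterated once more without losing a generator.
\end{enumerate}

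I expect (f) to be the main obstacle. The difficulty is arranging the colon by $h$ and the truncation degree so that the Hilbert function count gives exactly $\prod_{i\neq k}a_i$, uniformly in the $a_i$. My first attempt would be to mimic the monomial case of \cite{CCG12}: treat $h$ as playing the role of the variable of smallest exponent, and use the reducedness of $V(f_0,\ldots,f_{k-1})$ to control the Hilbert function of $T/(f_0,\ldots,f_{k-1},\ell)$ in low degrees. Failing that, I would instead show directly that every minimal $\mathbb X$ must lie on $V(f_0,\ldots,f_{k-1})$ and off $V(h)$, and compare degrees of complete intersections.
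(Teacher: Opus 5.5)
Your upper bound (a)--(b) is sound. By (a), any choice of the $\psi_i$ gives a (saturated) complete intersection of degree $\prod_{i\neq k}a_i$ that misses $V(f_k)$. The Bertini step then works because each system $\mathbb C f_j+f_kT_{a_j-a_k}$ is base-point free on $\mathbb P^n\setminus V(f_k)$. After $n-k$ general cuts you get smooth points on the smooth locus of $V(f_0,\ldots,f_{k-1})$ and nothing on its singular locus.

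The genuine gap is in the lower bound, step (e): you cut $\mathbb Y$ by a \emph{general} linear form $\ell$ instead of by $h$. You are then comparing with $F^\perp:h+(\ell)=J+(hg,\ell)$, where $J=(f_i\mid i\neq k)$. This ideal is in general strictly larger than $J+(\ell)$, whose colength is $\prod_{i\neq k}a_i$, and truncating only lowers the sum, so (f) cannot be achieved. For instance, take $F=x_0x_1^2$. Then $F^\perp=(X_0^2,X_1^3)$, $k=0$, $h=X_0$, $g=1$ and $F^\perp:h=(X_0,X_1^3)$. For general $\ell$ you get $F^\perp:h+(\ell)=(X_0,X_1)$, of colength $1$, while $\rk(F)=3$.

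The missing idea is to reduce modulo $h$ itself. This is what the paper means by saying the rank is ``computed by $h$'', i.e.\ $e$-computability in the sense of \cite{CCCGW18}. Since $\mathbb Y\cap V(h)=\emptyset$, $h$ is a nonzerodivisor on $R=T/I(\mathbb Y)$. With $e=\deg h$ the Hilbert function telescopes:
\[
e\,|\mathbb Y|=\sum_t\big(\HF(R,t)-\HF(R,t-e)\big)=\sum_t\HF\big(T/(I(\mathbb Y)+(h)),t\big)\geqslant\sum_t\HF\big(T/(F^\perp:h+(h)),t\big).
\]
Here $F^\perp:h+(h)=J+(hg)+(h)=J+(h)$; this is precisely where $f_k=h^2g$ is used. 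Moreover $J+(h)$ is a complete intersection of colength $e\prod_{i\neq k}a_i$. The reason is that $h$ divides $f_k$, and $f_k$ is regular modulo $J$ because $f_0,\ldots,f_n$ is a regular sequence. This is also the correct justification in (d); minimality of $k$ plays no role there. Hence $|\mathbb X|\geqslant|\mathbb Y|\geqslant\prod_{i\neq k}a_i$, with no truncation needed.

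Your fallback would not work either. Minimal apolar sets need not lie on $V(f_0,\ldots,f_{k-1})$ when $a_i=a_k$ for some $i<k$. For example, take $F=x_0x_1$ with $f_0=X_0^2-X_1^2$ and $f_1=X_0^2$; then $\mathbb X=V(X_0^2+X_1^2)$ is a minimal apolar set not contained in $V(f_0)$.
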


\section{Computation of VSP}
Consider $F^\perp = (f_0, \dots, f_{n})$ a complete intersection ideal and assume that 
$f_k=h^2g$ for some $h$ with ${\rm deg}(h)\geqslant 1$.
Let $J$ be the ideal 
$$J:=(f_0,\ldots,f_{k-1},f_{k+1},\ldots,f_n)\,.$$
Thus, 
$$F^\perp = J + (h^2 g)\,.$$

Let $\mathbb X \subset \mathbb{P}^n$ be a reduced subscheme of dimension zero. 
Let $\mathcal{I} := I(\mathbb X)$ be its 
ideal consisting of forms vanishing on $\mathbb X$. 
In particular, $\mathcal I$ is a radical ideal. 
Assume that $\mathcal{I}\subset F^\perp$.
Our goal is to compare the Hilbert 
function of $\mathcal{I}$ to that of the complete intersection $J$.

By assumption, $J$ is a complete intersection of 
codimension $n$, so $T/J$ is a complete intersection of dimension 1. 
Since $f_0, \dots, f_{n}$ forms a regular sequence,
$f_k=h^2 g$ is not a zero-divisor modulo $J$.
Thus, $h$, $h^2$, $g$, and $hg$ are not zero-divisors on 
$T/J$. 

The following result shows that in our setup, 
minimal apolar ideals have the same Hilbert function.

\begin{proposition}\label{prop:hilbert_eq}
Let $F^\perp = (f_0, f_1, \ldots,f_n) \subseteq T$ be a complete intersection.
Let $\deg f_i = a_i$ and assume that  $a_0 \leqslant a_1\leqslant \ldots \leqslant a_n$. 
    Assume that $f_i$ is non-reduced for some $i$. Let $k$ be the smallest 
    such that $f_k$ is not reduced. Let $f_k=h^2g$ 
    for some $h$ with ${\rm deg}(h)\geqslant 1$. Assume that the subscheme 
    $V(f_0,f_1,\ldots,f_{k-1})\subset \mathbb P^n$
    is reduced.  
Let $J$ be the ideal $(f_0,\ldots,f_{k-1},f_{k+1},\ldots,f_n)$ and $\mathbb X \subset \mathbb{P}^n$ be a finite set such that $|\mathbb X|=\rk(F)$.
Then for all $i \in \mathbb{Z}$, we have 
\[
\HF(T/I(\mathbb{X}), i) = \HF(T/J, i).
\]
\end{proposition}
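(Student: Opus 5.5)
The plan is to compare first differences of Hilbert functions. Throughout, $\mathbb X$ is taken to be a minimal apolar set, i.e.\ $I:=I(\mathbb X)\subset F^\perp$; this is the setting of this section, and the statement should be read this way. Choose a general linear form $\ell\in T_1$ not vanishing at any point of $\mathbb X$. Then $\ell$ is a non-zero-divisor on $T/I$. It is also a non-zero-divisor on $T/J$: indeed $T/J$ is a one-dimensional complete intersection, hence Cohen--Macaulay, and a general $\ell$ avoids its associated primes. Consequently
\[
\Delta\HF(T/I,i)=\HF(T/(I+(\ell)),i),\qquad \Delta\HF(T/J,i)=\HF(T/(J+(\ell)),i).
\]
Both difference sequences are non-negative. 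The first sums to $|\mathbb X|=\rk(F)$, and the second sums to $\deg J=\prod_{j\neq k}a_j$. By Theorem \ref{thm2} these two totals agree. Hence it suffices to prove the pointwise inequality
\[
\HF(T/(I+(\ell)),i)\ \geqslant\ \HF(T/(J+(\ell)),i)\qquad\text{for all } i .
\]
Equal sums then force equality of the first differences in every degree, and summing recovers $\HF(T/I,i)=\HF(T/J,i)$ for all $i$.

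To get the inequality, I would first identify colon ideals of $F^\perp$. Since $F^\perp=J+(h^2g)$ is a complete intersection, $F^\perp:h^2=(h^2\circ F)^\perp$ is Gorenstein with socle degree $\sum_j(a_j-1)-2\deg h$. The complete intersection $J+(g)$ is contained in it and has the same socle degree, because the generator $h^2g$ has been replaced by $g$. Two Artinian Gorenstein ideals with one contained in the other and the same socle degree coincide, so $F^\perp:h^2=J+(g)$. Similarly, $F^\perp:(hg)=J+(h)$.

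Next I use that $I$ is radical, so $I:h=I:h^2$. This gives
\[
I:h=I:h^2\subset F^\perp:h^2=J+(g).
\]
Geometrically, $I:h$ is the ideal of the points of $\mathbb X$ off $V(h)$, and it lies in a complete intersection that differs from $J$ only in the $k$-th generator. I would then run the counting argument behind Theorem \ref{thm2} (as in \cite{CCCGW18}) in a graded, degree-by-degree form rather than only for total lengths. Concretely, I would bound $\HF(T/(I+(\ell)),i)$ below using the exact sequence
\[
0\to T/(I:h)(-\deg h)\xrightarrow{\ \cdot h\ } T/I\to T/(I+(h))\to 0
\]
modulo $\ell$. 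The first term is controlled by $J+(g)$. For the second term, the points of $\mathbb X$ lying on $V(h)$ are controlled using $V(f_0,\dots,f_{k-1})$ reduced, together with $f_k$ being the first non-reduced generator. The target is to reassemble these bounds, degree by degree, into exactly the Hilbert function of $T/(J+(\ell))$. One ingredient is that $J$ agrees with $F^\perp$ in degrees $<a_k$; in those degrees the inequality is immediate from $I\subset F^\perp$.

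The main obstacle is this graded inequality. Theorem \ref{thm2} only gives the inequality on total lengths, and the equality $|\mathbb X|=\rk(F)$ has to be propagated to every degree. If the direct degree-wise bound is awkward, my fallback is this: observe that every inequality in the proof of Theorem \ref{thm2} must be an equality when $|\mathbb X|=\rk(F)$. I would use those equalities to force $I+(\ell)$ and $J+(\ell)$ to have the same Hilbert function. For instance, equality may force $(I:h)+(\ell)=J+(g)+(\ell)$, with the analogous statement for the part of $\mathbb X$ on $V(h)$, and then the exact sequence above gives the required identity.
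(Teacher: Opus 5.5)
Your reduction is sound as far as it goes. A general $\ell$ is a non-zero-divisor on $T/I$ and on $T/J$, both first differences are non-negative, and both sum to $\prod_{j\neq k}a_j=\rk(F)$. So a degree-wise inequality $\HF(T/(I+(\ell)),i)\geqslant\HF(T/(J+(\ell)),i)$ would indeed force equality. \textbf{The gap is that this inequality is the whole content of the proposition, and for $i\geqslant a_k$ you never prove it.} Reducing the sequence $0\to T/(I:h)(-\deg h)\to T/I\to T/(I+(h))\to 0$ ``modulo $\ell$'' does not preserve exactness. The control of $\mathbb X\cap V(h)$ is also never specified. Your fallback uses the wrong colon ideal: the containment $I:h\subset F^\perp:h^2=J+(g)$ only yields
\[ |\mathbb X|\geqslant \tfrac{1}{\deg h}\dim_{\mathbb C}T/(J+(g)+(h)), \]
which is strictly smaller than $\rk(F)$ unless $g\in J+(h)$. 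When $g$ is a nonzero constant, $F^\perp:h^2=T$ and the chain carries no information at all. So the equality $|\mathbb X|=\rk(F)$ cannot be propagated through this chain, and the guessed identity $(I:h)+(\ell)=J+(g)+(\ell)$ has no basis.

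The missing idea is to work with $h$ itself, the form that computes the rank in Theorem \ref{thm2}, and with $F^\perp:h$ rather than $F^\perp:h^2$. Put $e=\deg h$ and $I'=I:h$; this is the ideal of $\mathbb X'=\mathbb X\setminus V(h)$. Since $h$ is a non-zero-divisor on $T/I'$ and $V(I'+(h))=\emptyset$, we get $\dim_{\mathbb C}T/(I'+(h))=e|\mathbb X'|$. Because $h$ is a non-zero-divisor modulo $J$, one checks $F^\perp:h=J+(hg)$. Hence $I'+(h)\subseteq J+(h)$, a complete intersection of colength $e\prod_{j\neq k}a_j=e\,\rk(F)$. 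Thus
\[ e\,|\mathbb X|\geqslant e\,|\mathbb X'|=\dim_{\mathbb C}T/(I'+(h))\geqslant\dim_{\mathbb C}T/(J+(h))=e\,\rk(F), \]
and $|\mathbb X|=\rk(F)$ forces equality throughout. First, $\mathbb X\cap V(h)=\emptyset$, i.e.\ $I:h=I$. Second, the containment gives a degree-wise inequality of Hilbert functions with equal sums, so $I+(h)=J+(h)$. These are exactly the two identities the paper imports from \cite[Proposition 3.6]{CCCGW18}. Since $h$ is a non-zero-divisor on both $T/I$ and $T/J$, we get $(1-t^{e})H(T/I,t)=H(T/(I+(h)),t)=H(T/(J+(h)),t)=(1-t^{e})H(T/J,t)$, which is the claim. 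With this, neither the general linear form nor a separate degree-wise inequality is needed.
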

\begin{proof}
Recall the definition of $e$-computability from \cite[Definition 3.5]{CCCGW18}.
In Theorem \ref{thm2}, 
the rank of $F$ is computed by $h$, as can be see from 
equation (3.1) in the proof of \cite[Proposition 3.2]{BCMS26}. 
Applying \cite[Proposition 3.6]{CCCGW18}
we get
\begin{align}
    I(\mathbb{X}):(h) & = I(\mathbb{X}) \label{EQ:I:h} \\
    I(\mathbb{X}) +(h) & =F^\perp +(h)= J+(h). \label{EQ:I+h=J+h}
\end{align}
By Equation \eqref{EQ:I:h} we have $h$ is a nonzerodivisor on $I(\mathbb{X}).$
Hence we have 
\begin{equation}\label{EQ:HS1}
  (1-t^{\deg h}) H(T/I(\mathbb{X}), t)  = H(T/(I(\mathbb{X})+(h)), t),
\end{equation}
where $H(T/I(\mathbb{X}), t)$ denotes the Hilbert series of $T/I(\mathbb{X})$ in the variable $t.$
Similarly, $h$ is also a nonzerodivisor in $T/J.$ Therefore
\begin{equation}\label{EQ:HS2}
  (1-t^{\deg h}) H(T/J, t)  = H(T/(J+(h)), t).
\end{equation}
By Equation \eqref{EQ:I+h=J+h}, RHS of Equations \eqref{EQ:HS1} 
and \eqref{EQ:HS2} are equal. Hence we obtain
$$\HF(T/I(\mathbb{X}), i) = \HF(T/J, i)$$ 
for $i \geq 0.$
This completes the proof of the Proposition. 
\end{proof}

The following lemma is a technical tool 
to obtain a generating set for minimal 
apolar ideals of forms considered in our setup.
\begin{lemma}\label{Lem:F^{perp}=I+h^2g}
    Let $F^\perp = (f_0,f_1,\ldots,f_n) \subseteq T$ be a complete intersection.
Let $\deg f_i = a_i$ and assume that $a_0 \leqslant a_1 \leqslant \cdots \leqslant
a_n$. Assume that $f_i$ is non-reduced for some $i$. Let $k$ be the smallest
such that $f_k$ is not reduced. 
Assume that the subscheme 
    $V(f_0,f_1,\ldots,f_{k-1})\subset \mathbb P^n$
    is reduced. 
Let $f_k = h^2g$ where $\deg(h)\geqslant 1$. If $I(\mathbb{X}) \subseteq F^{\perp}$ is any minimal apolar ideal, then 
\[
I(\mathbb{X}) +(f_k) = F^{\perp}.
\]
\end{lemma}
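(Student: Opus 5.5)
The plan is to compare Hilbert functions. The inclusion $I(\mathbb X)+(f_k)\subseteq F^\perp$ holds because $I(\mathbb X)\subseteq F^\perp$ and $f_k\in F^\perp$. So it will be enough to show that $T/(I(\mathbb X)+(f_k))$ and $T/F^\perp$ have the same Hilbert function. Equality in every degree then forces equality of the ideals.

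\textbf{Step 1: the Hilbert series of $T/F^\perp$.} Since $f_k$ is a nonzerodivisor on $T/J$ and $F^\perp=J+(f_k)$, we have
\[
H(T/F^\perp,t)=(1-t^{a_k})\,H(T/J,t).
\]

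\textbf{Step 2: the Hilbert series of $T/(I(\mathbb X)+(f_k))$.} This step needs $f_k$ to be a nonzerodivisor on $T/I(\mathbb X)$. From \eqref{EQ:I:h} in the proof of Proposition \ref{prop:hilbert_eq}, $h$ is a nonzerodivisor on $T/I(\mathbb X)$. Since $I(\mathbb X)$ is radical, this means no point of $\mathbb X$ lies on $V(h)$. We must also rule out points of $\mathbb X$ on $V(g)$.

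To do this, apply \cite[Proposition 3.6]{CCCGW18} with $hg$ in place of $h$. I expect the same computation from \cite[Proposition 3.2]{BCMS26} to show that the rank is also computed by $hg$. A more direct route, if the computability condition is unclear, uses $I(\mathbb X)+(f_k)\subseteq F^\perp$ together with reducedness. For $p\in\mathbb X$ with $g(p)=0$, consider the points of $\mathbb X$ on $V(g)$. Since $h$ misses $\mathbb X$, $h^2g$ and $g$ vanish on the same points of $\mathbb X$. So $I(\mathbb X)+(hg)$ defines the same reduced set as $I(\mathbb X)+(h^2g)$, up to saturation. Combining this with \eqref{EQ:I+h=J+h} should lead to a contradiction.

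The cleanest version is the first one: establish $I(\mathbb X):(f_k)=I(\mathbb X)$ by writing
\[
I(\mathbb X):(h^2g)=\big(I(\mathbb X):(h^2)\big):(g)=I(\mathbb X):(g)
\]
and then showing $I(\mathbb X):(g)=I(\mathbb X)$. The last equality follows from \cite[Proposition 3.6]{CCCGW18} if $g$ (or $hg$) also computes the rank.

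Once $f_k$ is a nonzerodivisor on $T/I(\mathbb X)$, we get
\[
H\big(T/(I(\mathbb X)+(f_k)),t\big)=(1-t^{a_k})\,H(T/I(\mathbb X),t).
\]
By Proposition \ref{prop:hilbert_eq}, $H(T/I(\mathbb X),t)=H(T/J,t)$, so this equals $(1-t^{a_k})H(T/J,t)=H(T/F^\perp,t)$ by Step 1. This gives the lemma.

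\textbf{Main obstacle.} The hard part is showing that $g$ is a nonzerodivisor modulo $I(\mathbb X)$, i.e., that no point of a minimal apolar set lies on $V(g)$. My first attempt will be to check that the bound in \cite[Proposition 3.2]{BCMS26} is equally attained with $hg$. If $g$ is a constant, this step is vacuous.

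If that fails, I would use a fallback. Since $V(h)\cap\mathbb X=\emptyset$, a degree count gives
\[
\HF\big(T/(I(\mathbb X)+(f_k)),i\big)\geqslant \HF(T/F^\perp,i)
\]
from the inclusion. For large $i$, the left side equals the number of points of $\mathbb X$ on $V(g)$, while the right side is $0$. Comparing this with the length of $J+(h)$ via \eqref{EQ:I+h=J+h} should show that the number of points of $\mathbb X$ on $V(g)$ is zero.
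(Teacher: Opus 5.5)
There is a genuine gap. Your reduction is sound, but it is as strong as the lemma itself. Given Proposition \ref{prop:hilbert_eq}, the lemma holds if and only if $f_k$ is a nonzerodivisor on $T/I(\mathbb X)$, i.e.\ if and only if $\mathbb X\cap V(g)=\emptyset$. For the converse direction, note that $I(\mathbb X)+(f_k)=F^\perp$ is $\mathfrak m$-primary. So the step you call the main obstacle is the whole content of the lemma, and neither of your routes proves it.

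The first route fails in general. Since $hg$ is a nonzerodivisor on $T/J$, one finds $F^\perp:(hg)=J+(h)$, hence $(F^\perp:(hg))+(hg)=J+(h)$. The bound of \cite{CCCGW18} attached to $hg$ is therefore $\frac{\deg h}{\deg h+\deg g}\prod_{i\neq k}a_i$. This is strictly less than $\rk(F)=\prod_{i\neq k}a_i$ as soon as $\deg g\geqslant 1$. So $hg$ does not compute the rank, and \cite[Proposition 3.6]{CCCGW18} tells you nothing about $g$; the same holds for $g$ itself in general.

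The fallback is only a hope. \eqref{EQ:I+h=J+h} together with the equality of Hilbert functions cannot by itself keep $\mathbb X$ off $V(g)$. Already for $n=k=1$, an ideal $(f_0-\phi h)$ with suitable $\phi$ satisfies both conditions and can vanish at a zero of $g$. So the containment $I(\mathbb X)\subseteq F^\perp$ must be used more finely, and you do not say how.

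The missing idea, which is how the paper proceeds, is to run your Hilbert-function comparison with $h^2$ instead of $f_k$. You already know $h^2$ is a nonzerodivisor on both $T/I(\mathbb X)$ and $T/J$. From $I(\mathbb X)+(h^2)\subseteq J+(h^2)$ and Proposition \ref{prop:hilbert_eq} one gets $I(\mathbb X)+(h^2)=J+(h^2)\supseteq F^\perp$. Next, $F^\perp:(h^2)=J+(g)$, because $h^2$ is a nonzerodivisor modulo $J$. So any $u\in F^\perp$ can be written $u=v+h^2h'$ with $v\in I(\mathbb X)$ and $h^2h'\in F^\perp\cap(h^2)=h^2(J+(g))$. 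This gives $F^\perp=I(\mathbb X)+h^2J+(f_k)$. Since $h^2J\subseteq \mathfrak m F^\perp$, graded Nakayama (phrased in the paper as a least-degree argument) yields $F^\perp=I(\mathbb X)+(f_k)$. That $f_k$ is a nonzerodivisor modulo $I(\mathbb X)$ then follows from the lemma rather than being an input to it.
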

\begin{proof} 
Let $J$ be the ideal $(f_0,\ldots,f_{k-1},f_{k+1},\ldots,f_n)$.
We first claim that 
\begin{equation}\label{EQ:I+h^2=J+h^2}
    I(\mathbb{X}) +(h^2) = J+(h^2).
\end{equation}
Note that we have $I(\mathbb{X}) \subset F^{\perp} \subset J+(h^2).$ 
Hence, $I(\mathbb{X}) +(h^2) \subseteq J+(h^2)$. 
By \eqref{EQ:I:h}, $h$ is a nonzerodivisor on $T/I(\mathbb X)$
and so the same is true for $h^2$. Clearly, $h^2$ is a 
nonzerodivisor on $T/J$. This gives
\begin{align*}
\HF(T/I(\mathbb{X}), i)- \HF(T/I(\mathbb{X}), i-2\deg(h))&=
    \HF(T/(I(\mathbb{X})+(h^2)), i)\\
    &\geq \HF(T/(J+(h^2)), i)\\
    &=\HF(T/J, i)- \HF(T/J, i-2\deg(h))\,.
\end{align*}
By Proposition \ref{prop:hilbert_eq}, the extremes are equal
and so we have equality everywhere in the above. This
shows that $I(\mathbb{X}) +(h^2) = J+(h^2)$.

Next we claim that $(F^\perp:h^2)=J+(g)$. If $h^2x=j+h^2gp$,
where $j\in J$, then $h^2(x-gp)\in J$, which shows that 
$x-gp\in J$. It follows that $(F^\perp:h^2)\subset J+(g)$. 
The other inclusion is obvious. This proves the claim. 

Since $F^{\perp} \subset J+(h^2)$, by 
equation \eqref{EQ:I+h^2=J+h^2} we obtain 
$F^{\perp}\subseteq I(\mathbb X)+(h^2)$. Hence, for 
$u \in F^{\perp}$, there exist 
$v \in I(\mathbb{X})$ and $h' \in T$ such that
\[
u = v +h^2h'.
\]
Note that $u-v \in F^{\perp}$. Therefore 
\begin{equation}\label{EQ:h^2colon}
    h^2h' \in (F^{\perp} \cap (h^2)) = h^2(F^{\perp}\colon (h^2)).
\end{equation}
Moreover, $F^{\perp}:(h^2) = J+(g).$ Substituting this in Equation \eqref{EQ:h^2colon} we obtain
\begin{equation}
    h^2h' \in h^2(J+(g)) = (h^2)J+(h^2g)=(h^2)J+(f_k).
\end{equation}
Hence we obtain $u \in I(\mathbb{X}) +(h^2)J+(f_k).$
Therefore, $F^{\perp} \subseteq I(\mathbb{X}) +(h^2)J+(f_k)$.
Since the reverse inclusion is trivial, we obtain the
equality
\begin{equation}\label{EQ:NAK}
    F^{\perp} = I(\mathbb{X})+(h^2)J +(f_k).
\end{equation}
Suppose $I(\mathbb{X}) +(f_k)\subsetneqq F^{\perp}$. 
Choose a homogeneous polynomial 
$H\in F^{\perp}\setminus (I(\mathbb{X}) +(f_k))$ of 
least degree possible. Using \eqref{EQ:NAK}, write 
$H=v+ h^2v'$ where $v\in I(\mathbb X)+(f_k)$ and $v' \in J$. 
Since $\deg v' <\deg H$, $v' \in I(\mathbb X)+(f_k).$ 
This means that $H=v+ h^2v'\in I(\mathbb X)+(f_k)$, 
a contradiction. This completes the proof of the Lemma.
\end{proof}

We now describe the generators of any minimal apolar ideal.
\begin{theorem}\label{vspthm-unified}
    Let $F^\perp = (f_0,f_1,\ldots,f_n) \subseteq T$ be a complete intersection.
Let $\deg f_i = a_i$ and assume that $a_0 \leqslant a_1 \leqslant \cdots \leqslant
a_n$. Assume that $f_i$ is non-reduced for some $i$. Let $k$ be the smallest
such that $f_k$ is not reduced. Let $f_k = h^2g$ where $\deg(h)\geqslant 1$.
Assume that the subscheme $V(f_0,\ldots,f_{k-1}) \subset \mathbb P^n$ is reduced.
If $ I(\mathbb X) \subset F^\perp$ is any minimal apolar ideal, 
then $I(\mathbb{X})$ is a complete intersection, and there exist $\phi_i \in T_{a_i-a_k}$ such that
\[
 I(\mathbb{X}) = \big(f_i - \phi_if_k \;\big|\; i \in \{0,\ldots,n\}\setminus\{k\}\big).
\]
\end{theorem}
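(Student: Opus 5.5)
The plan is to start from Lemma \ref{Lem:F^{perp}=I+h^2g}, which gives $I(\mathbb X)+(f_k)=F^\perp$. For each $i\neq k$ we have $f_i\in F^\perp$, so we can write $f_i=g_i+\phi_i f_k$ with $g_i\in I(\mathbb X)$ and $\phi_i\in T$. Taking homogeneous components of degree $a_i$, we may assume $g_i$ is homogeneous of degree $a_i$ and $\phi_i\in T_{a_i-a_k}$, with $\phi_i=0$ whenever $a_i<a_k$. Thus $g_i=f_i-\phi_if_k\in I(\mathbb X)$. Let $K=(g_i\mid i\neq k)\subseteq I(\mathbb X)$. The goal is to show $K=I(\mathbb X)$ and that $K$ is a complete intersection.

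Next we show $K$ is a complete intersection. The ideal $K+(f_k)=(f_0,\ldots,f_n)=F^\perp$ is Artinian of codimension $n+1$. Hence $\dim T/K\geq 1$, and $T/K$ is defined by $n$ equations, so $\dim T/K=1$. Therefore $g_i$ ($i\neq k$) is a homogeneous system of $n$ elements generating an ideal of height $n$, and so a regular sequence. Thus $K$ is a complete intersection with generator degrees $a_i$ ($i\neq k$), the same as those of $J$. Consequently $\HF(T/K,-)=\HF(T/J,-)$, since the Hilbert function of a complete intersection depends only on the degrees.

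To conclude, we compare Hilbert functions. By Proposition \ref{prop:hilbert_eq}, $\HF(T/I(\mathbb X),i)=\HF(T/J,i)=\HF(T/K,i)$ for all $i$. Since $K\subseteq I(\mathbb X)$ and the quotients have equal Hilbert functions in every degree, $K_i=I(\mathbb X)_i$ for all $i$, so $K=I(\mathbb X)$. This shows that $I(\mathbb X)$ is the complete intersection $(f_i-\phi_if_k\mid i\neq k)$.

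The only delicate point is the height count in the second step. We need $K\subseteq I(\mathbb X)$ to be proper, so that $\dim T/K\geq 1$, and we need that adding the single element $f_k$ lowers the dimension by at most one. The latter is Krull's principal ideal theorem applied to $T/K$: $\dim T/K\leq \dim T/(K+(f_k))+1=1$. Properness is immediate since $\mathbb X\neq\emptyset$. The rest is bookkeeping with degrees.
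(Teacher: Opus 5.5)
Your proof is correct and follows the same route as the paper. You use $I(\mathbb X)+(f_k)=F^\perp$ to write $f_i=g_i+\phi_if_k$, note that $K=(f_i-\phi_if_k\mid i\neq k)$ satisfies $K+(f_k)=F^\perp$ and is therefore a complete intersection with the same generator degrees as $J$, and conclude $K=I(\mathbb X)$ from Proposition~\ref{prop:hilbert_eq}; your Krull principal ideal theorem argument just supplies the justification the paper omits for why $K+(f_k)=F^\perp$ forces $K$ to be generated by a regular sequence.
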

\begin{proof}
    By Lemma \ref{Lem:F^{perp}=I+h^2g}, we have $F^{\perp} = I(\mathbb{X})+(f_k)$ for any minimal apolar ideal $I(\mathbb{X})$ to $F$. 
    Therefore, for each $0 \leq i \leq n$, and $i\neq k$, we have
    \begin{equation}
        f_i = v_i + \phi_{i}f_k
    \end{equation}
    for some $\phi_i \in T_{a_i-a_k}$. Let 
    $I :=\big(f_i - \phi_if_k \;\big|\; i \in \{0,\ldots,n\}\setminus\{k\}\big)$. Note that 
    $I+(f_k) = F^{\perp}.$ Hence, $I$ is generated by a 
    regular sequence. Since $I \subset I(\mathbb{X})$ 
    and both have the same Hilbert function, it follows
    that we have equality.
\end{proof}

\begin{remark}
    In Theorem \ref{vspthm-unified} above, the hypothesis that $V(f_0,f_1,\ldots,f_{k-1})\subset \mathbb P^n$ is a reduced subscheme
    is necessary. See Example \ref{counterexample}.
\end{remark}

{We now compute the dimension of the 
VSP of forms considered in the previous section.}
Let ${\rm Hilb}_{\rk(F)}(\mathbb P^n)$ denote the Hilbert
scheme of zero dimensional subschemes of $\mathbb P^n$ 
of length $\rk(F)$. 
\begin{definition}\label{def mathcal V}
Let $\mathcal V$ denote the subset of 
${\rm Hilb}_{\rk(F)}(\mathbb P^n)$ consisting of subschemes 
$\mathbb X$ which are reduced and such that $I(\mathbb X)\subset F^\perp$.     
\end{definition}
\noindent 
Recall that 
${\rm VSP}(F)\subset {\rm Hilb}_{\rk(F)}(\mathbb P^n)$ is defined
to be the closure of $\mathcal V$. Thus, for example, 
with the hypothesis of Theorem \ref{vspthm-unified},
if we also assume that $k=n$ and $a_{n-1}<a_n$, then 
we conclude that ${\rm VSP}(F)$ consists of a single 
point. 

Now let the hypothesis be as in Theorem \ref{vspthm-unified}. 
Consider the family 
$$\mathcal B:=\prod_{\substack{i=0\\i\neq k}}^nT_{a_i-a_k}\,.$$ 
We argue as in the proof 
of \cite[Proposition 23]{BBT13}. 
Consider the open subset $U\subset \mathcal B$ 
consisting of elements $(\phi_{0},\ldots, \phi_{k-1},\phi_{k+1},\ldots,\phi_n)$
such that the ideal 
$\mathcal I=(f_0-\phi_{0}f_k,\ldots,f_{k-1}-\phi_{k-1}f_k,f_{k+1}-\phi_{k+1}f_k,\ldots, f_n-\phi_nf_k)$
defines a reduced subscheme of $\mathbb P^n$. 
It is clear that $U$ maps surjectively onto $\mathcal V$. 
It follows that with the hypothesis as in 
Theorem \ref{vspthm-unified}, ${\rm VSP}(F)$ is irreducible. 

Finally we want to compute the dimension of ${\rm VSP}(F)$.
{ We do this by finding a variety 
$\mathcal B'$ along with a morphism of schemes 
$\mathcal B'\to  {\rm Hilb}_{\rk(F)}(\mathbb P^n)$
which is bijective onto $\mathcal V$. 
As a result we get that 
$\dim (\mathcal B')=\dim({\rm VSP}(F))$.}
As before, $(f_0,\ldots,f_n) \subseteq T$ 
is a complete intersection with
$\deg f_i = a_i$ and 
$a_0 \leqslant \cdots \leqslant a_n$. Fix $k \in \{0,\ldots,n\}$
and let $J = (f_0,\ldots,f_{k-1},f_{k+1},\ldots,f_n)$. 
We first inductively construct 
subspace $C_d\subset T_d$
such that $T_d=J_d\oplus C_d$. Define $C_0=T_0=\mathbb C$. 
Assume we have defined $C_i$ for $i\leqslant d-1$ such that  
\begin{enumerate}
    \item[(a)] $T_i=J_i\oplus C_i$,
    \item[(b)] $f_kC_{i-a_k}\subset C_i$. 
\end{enumerate} 
We claim that $J_d\cap f_kC_{d-a_k}=0$. 

If $f_kx\in J_d$
then $x\in J_{d-a_k}\cap C_{d-a_k}=0$. Choose $C_d$
to be any complement of $J_d$ containing $f_kC_{d-a_k}$.
This shows we may find $C_i$'s satisfying the above 
two conditions. 

\begin{lemma}\label{prelim lem uniqueness}   
Let 
$\mathcal I=(f_0-\phi_{0}f_k,\ldots,f_{k-1}-\phi_{k-1}f_k,f_{k+1}-\phi_{k+1}f_k,\ldots, f_n-\phi_nf_k)$,
where $\phi_{i}\in T_{a_i-a_k}$. 
If $\beta\in \mathcal I_d\cap C_d$ then $\beta=0$.
\end{lemma}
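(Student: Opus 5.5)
We argue by induction on the degree $d$. The lemma says that $\mathcal I_d$ and $C_d$ meet only in $0$. The idea is to reduce an element of $\mathcal I_d$ to something of lower degree times $f_k$, using properties (a) and (b) of the complements $C_i$. For $d<0$ there is nothing to prove. For $d=0$ we have $\mathcal I_0\subset T_0$, and $\mathcal I_0=0$ unless some $a_i=0$. If some $a_i=0$ then $f_i$ is a nonzero constant, so $1\in J$, hence $J_0=T_0$ and $C_0$ is forced to be $0$. Since $f_0,\dots,f_n$ is a regular sequence of positive degree forms, this degenerate case does not actually occur, so we may assume all $a_i\geq 1$.

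Now let $\beta\in\mathcal I_d\cap C_d$ and write
\[
\beta=\sum_{i\neq k} p_i\,(f_i-\phi_i f_k)=\sum_{i\neq k}p_if_i-\Big(\sum_{i\neq k}p_i\phi_i\Big)f_k ,
\]
with $p_i\in T_{d-a_i}$. Set $q:=\sum_{i\neq k}p_i\phi_i\in T_{d-a_k}$. By (a) in degree $d-a_k<d$, decompose $q=j+c$ with $j\in J_{d-a_k}$ and $c\in C_{d-a_k}$. Then
\[
\beta+cf_k=\sum_{i\neq k}p_if_i-jf_k\in J_d .
\]
By (b), $cf_k\in C_d$, so $\beta+cf_k\in C_d\cap J_d=0$. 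Hence $\beta=-cf_k$.

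The remaining task is to show $c=0$. Because $\beta\in\mathcal I$, we get $cf_k\in\mathcal I_d$. Reducing modulo $\mathcal I$, each $f_i$ becomes $\phi_if_k$, so $T/\mathcal I$ is a quotient of $T/(\mathcal I+(f_k))=T/F^\perp$, in the sense that $\mathcal I+(f_k)=F^\perp$ as in the proof of Theorem \ref{vspthm-unified}. This alone does not give $c=0$. What is needed is that $f_k$ is a nonzerodivisor on $T/\mathcal I$.

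Since $\mathcal I+(f_k)=(f_0,\dots,f_n)$ is a complete intersection of codimension $n+1$, the $n$ generators of $\mathcal I$ together with $f_k$ form a homogeneous system of parameters. In the graded polynomial ring, any permutation of a regular sequence of homogeneous elements is regular, so the generators of $\mathcal I$ followed by $f_k$ form a regular sequence. Therefore $f_k$ is a nonzerodivisor modulo $\mathcal I$, and $cf_k\in\mathcal I$ gives $c\in\mathcal I_{d-a_k}$. Now $c\in\mathcal I_{d-a_k}\cap C_{d-a_k}$ with $d-a_k<d$, so the induction hypothesis gives $c=0$, and hence $\beta=0$.

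The main obstacle is this nonzerodivisor step. It relies on the regular-sequence permutation fact together with the identity $\mathcal I+(f_k)=F^\perp$. That identity holds by definition of $\mathcal I$, for arbitrary $\phi_i$: $f_i\in\mathcal I+(f_k)$, and conversely each generator of $\mathcal I$ lies in $F^\perp$. The rest is bookkeeping with the decompositions (a) and (b).
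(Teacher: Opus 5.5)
Your proof is correct and is essentially the paper's argument: the same decomposition $\sum_{i\neq k}p_i\phi_i=j+c$ in $J_{d-a_k}\oplus C_{d-a_k}$ gives $\beta=-cf_k$, and your induction on $d$ is the paper's iteration $\beta_0=-\beta_1f_k=\beta_2f_k^2=\cdots$. The one difference is that you justify $(\mathcal I:f_k)=\mathcal I$, which the paper simply asserts: $\mathcal I+(f_k)=(f_0,\ldots,f_n)$ is $\mathfrak m$-primary, so the generators of $\mathcal I$ together with $f_k$ form a homogeneous system of parameters and hence a regular sequence in any order. Note also that your aside ``$T/\mathcal I$ is a quotient of $T/(\mathcal I+(f_k))$'' is backwards (it is the other way round), though it is never used.
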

\begin{proof}
Let $\beta_0:=\beta$. We will find a sequence of 
elements $\beta_i$, for $i\geqslant 1$, such that 
$\beta_{i-1}=(-1)^i\beta_if_k$. Then it will follow
that $\beta=0$. {Since 
$\beta \in \mathcal I$, there exist $\delta_i$, for 
$i\in\{0,1,\ldots, n\}\setminus \{k\}$, such that}
    \begin{align*}
        \beta=\beta_0&=\sum_{i=0}^{k-1}\delta_i(f_{i}-\phi_if_k)+\sum_{i=k+1}^n\delta_i(f_{i}-\phi_if_k)\\
            &=\sum_{i=0}^{k-1}\delta_if_i+\sum_{i=k+1}^n\delta_if_{i} -\left(\sum_{\substack{i=0\\i\neq k}}^n\delta_i\phi_i\right)f_k\\
            &=\left(\sum_{i=0}^{k-1}\delta_if_i+\sum_{i=k+1}^n\delta_if_{i} -\alpha f_k\right)-\beta_1f_k\,,
    \end{align*}
    where 
    $$\sum_{\substack{i=0\\i\neq k}}^n\delta_i\phi_i=\alpha+\beta_1$$ 
    is the decomposition 
    in $J_{d-a_k}\oplus C_{d-a_k}$. Since $\beta_1\in C_{d-a_k}$ 
    we obtain $\beta_1f_k\in C_d$. 
    Clearly, $J_{d-a_k}f_k\subset J_d$ and so 
    $$\left(\sum_{i=0}^{k-1}\delta_if_i+\sum_{i=k+1}^n\delta_if_{i} -\alpha f_k\right)\in J_d.$$ 
    Hence, $\beta_0+\beta_1f_k\in C_d\cap J_d=0$. 
    Thus, $\beta_0=-\beta_1f_k$.
    As $(\mathcal I:f_k)=\mathcal I$, we obtain
    $\beta_1\in \mathcal I_{d-a_k}\cap C_{d-a_k}$.
    Proceeding in this fashion we see that $\beta_0=-\beta_1f_k=\beta_2f_k^2=\ldots$.
    This shows that $\beta=0$.
\end{proof}

\begin{lemma}\label{unique in C_i}
    Let $\phi_i,\psi_i\in C_{a_i-a_k}$. If
    \begin{align*}
        (f_0-\phi_{0}f_k,\ldots,&f_{k-1}-\phi_{k-1}f_k,f_{k+1}-\phi_{k+1}f_k,\ldots, f_n-\phi_nf_k)=\\
        &(f_0-\psi_{0}f_k,\ldots,f_{k-1}-\psi_{k-1}f_k,f_{k+1}-\psi_{k+1}f_k,\ldots, f_n-\psi_nf_k)
    \end{align*}
    then $\phi_i=\psi_i$.
\end{lemma}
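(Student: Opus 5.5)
Call the two ideals $\mathcal I_\phi$ and $\mathcal I_\psi$, and assume $\mathcal I_\phi=\mathcal I_\psi$. The plan is to produce, for each $i\neq k$, an element of this common ideal that lies in $C_{a_i}$ and equals $(\phi_i-\psi_i)f_k$. Lemma \ref{prelim lem uniqueness} will then force it to vanish, and cancelling $f_k$ gives $\phi_i=\psi_i$.

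Fix $i\in\{0,\ldots,n\}\setminus\{k\}$. Both $f_i-\phi_if_k$ and $f_i-\psi_if_k$ lie in $\mathcal I_\phi=\mathcal I_\psi$, so their difference
\[
\beta:=(f_i-\psi_if_k)-(f_i-\phi_if_k)=(\phi_i-\psi_i)f_k
\]
lies in $(\mathcal I_\phi)_{a_i}$. Since $\phi_i-\psi_i\in C_{a_i-a_k}$, property (b) of the construction, $f_kC_{a_i-a_k}\subset C_{a_i}$, gives $\beta\in C_{a_i}$. Thus $\beta\in(\mathcal I_\phi)_{a_i}\cap C_{a_i}$, and Lemma \ref{prelim lem uniqueness}, applied to $\mathcal I_\phi$ with $d=a_i$, gives $\beta=0$.

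Finally, $T$ is a domain and $f_k\neq 0$, so $(\phi_i-\psi_i)f_k=0$ forces $\phi_i=\psi_i$. If $a_i<a_k$, then $T_{a_i-a_k}=0$ and there is nothing to prove.

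I do not expect a real obstacle. The only check needed is that Lemma \ref{prelim lem uniqueness} applies for arbitrary $\phi_j\in T_{a_j-a_k}$, and in particular for $\phi_j\in C_{a_j-a_k}$. That lemma's proof uses $(\mathcal I:f_k)=\mathcal I$, which holds because $f_k$ together with the generators of $\mathcal I$ generates $F^\perp$ and so forms a regular sequence. The lemma as stated in the paper takes this for granted, so I may use it as given.
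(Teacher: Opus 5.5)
Your proof is correct and follows the paper's argument: both show that $(\phi_j-\psi_j)f_k$ lies in $\mathcal I_{a_j}\cap C_{a_j}$, using $f_kC_{a_j-a_k}\subset C_{a_j}$, and then invoke Lemma~\ref{prelim lem uniqueness}. The one difference is that you get membership in $\mathcal I$ directly, by subtracting the two generators $f_j-\psi_jf_k$ and $f_j-\phi_jf_k$ of the common ideal, which makes the paper's detour unnecessary (expanding $f_j-\psi_jf_k$ in the $\phi$-generators and using the regular sequence to show $\alpha_j=1$).
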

\begin{proof}
There exist $\alpha_i\in T_{a_j-a_i}$ such that 
\begin{align}\label{uniqueness e1}
    f_j-\psi_jf_k&=\sum_{\substack{i=0\\i\neq k}}^{n}\alpha_i(f_{i}-\phi_if_k)\\
        &=\sum_{\substack{i=0\\i\neq k}}^{n}\alpha_if_{i}-
            \left(\sum_{\substack{i=0\\i\neq k}}^n\alpha_i\phi_i\right)f_k\,.\nonumber
\end{align}
By rearranging the terms we obtain
$$(1-\alpha_j)f_j= \sum_{\substack{i=0\\i\notin \{k,j\}}}^{n}\alpha_if_{i}+
            \left(\psi_j-\sum_{\substack{i=0\\i\neq k}}^n\alpha_i\phi_i\right)f_k\,.$$
{ Since $f_0, \ldots, f_n$ is a regular sequence
and $1-\alpha_j\in \mathbb C$ we get $\alpha_j=1$.} 

Substitute this in  Equation
\eqref{uniqueness e1}, we get 
$$(\phi_j-\psi_j)f_k=\sum_{\substack{i=0\\i\notin \{k,j\}}}^{n}\alpha_i(f_{i}-\phi_if_k)\,.$$
By Lemma \ref{prelim lem uniqueness}
we obtain $(\phi_j-\psi_j)f_k\in \mathcal I_{a_j}\cap C_{a_j}$,
and so is zero. Hence, $\phi_j=\psi_j$.
\end{proof}

\begin{lemma}\label{lem:existence-reduced-form}
Let
\[
\mathcal I = \big(f_0-\phi_{0}f_k,\ldots,f_{k-1}-\phi_{k-1}f_k,\ f_{k+1}-\phi_{k+1}f_k,\ldots,f_n-\phi_nf_k\big)
\]
for some $\phi_i \in T_{a_i-a_k}$. Then there exist
$\psi_i \in C_{a_i-a_k}$ such that
\[
\mathcal I = \big(f_0-\psi_{0}f_k,\ldots,f_{k-1}-\psi_{k-1}f_k,\ f_{k+1}-\psi_{k+1}f_k,\ldots,f_n-\psi_nf_k\big).
\]
\end{lemma}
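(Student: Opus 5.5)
We want to replace each $\phi_i$ by an element of $C_{a_i-a_k}$ without changing the ideal. The plan is an induction on the degree, processing the generators in order of increasing $a_i$. At each step we modify one $\phi_j$ by adding an element of $\mathcal I_{a_j-a_k}$ (times $f_k$), which does not change the ideal, and we push the $J$-component of $\phi_j$ into lower degree.

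The basic move is the following. Decompose $\phi_j=\alpha+\gamma$ with $\alpha\in J_{a_j-a_k}$ and $\gamma\in C_{a_j-a_k}$. Write $\alpha=\sum_{i\neq k}\lambda_i f_i$ with $\lambda_i\in T_{a_j-a_k-a_i}$. Only indices with $a_i\leqslant a_j-a_k<a_j$ occur, so only generators of strictly smaller degree than $f_j$ appear. Using $f_i=(f_i-\phi_if_k)+\phi_if_k$ we get
$$f_j-\phi_jf_k=f_j-\gamma f_k-\sum_{i\neq k}\lambda_i(f_i-\phi_if_k)-\Big(\sum_{i\neq k}\lambda_i\phi_i\Big)f_k^2.$$
So, modulo the other generators $f_i-\phi_if_k$ with $a_i<a_j$, we can replace $\phi_j$ by
$$\phi_j'=\gamma+\Big(\sum\lambda_i\phi_i\Big)f_k.$$
The ideal is unchanged, because the change of generators is unitriangular: it adds combinations of generators of lower degree.

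The new $\phi_j'$ differs from the $C$-part $\gamma$ by $f_k$ times an element $\theta=\sum\lambda_i\phi_i$ of degree $a_j-2a_k$. Decompose $\theta=\alpha'+\gamma'$ in $J\oplus C$. Then $\gamma'f_k\in C_{a_j-a_k}$ by property (b) of the complements. The $J$-part $\alpha' f_k$ lies in $J_{a_j-a_k}$, and its expression $\alpha'=\sum\lambda'_if_i$ uses coefficients of degree $a_j-2a_k-a_i$. Repeating the move replaces the remaining $J$-part by $f_k^2\cdot(\text{something of degree } a_j-3a_k)$, and so on. Each iteration lowers by $a_k$ the degree of the leftover term that must still be decomposed. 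If $a_k\geqslant 1$, then after finitely many steps that degree becomes negative and the leftover vanishes; if $a_k=0$ the statement is trivial since then $f_k$ is a unit, impossible for a complete intersection of this type. At that point $\phi_j$ has been replaced by an element $\psi_j\in C_{a_j-a_k}$, namely a sum of terms $\gamma^{(m)}f_k^m$ with $\gamma^{(m)}\in C$, which lies in $C$ by (b).

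I do the generators in order of increasing degree. Strictly, the move only needs the lower-degree generators $f_i-\phi_if_k$ to be in $\mathcal I$, not that their $\phi_i$ are already reduced, so the order is only for bookkeeping. Each step preserves the ideal, and at the end all $\psi_i\in C_{a_i-a_k}$.

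The main point to check carefully is the bookkeeping in the iteration. Each replacement adds to $f_j-\phi_jf_k$ only $T$-multiples of generators $f_i-\phi_if_k$ with $i\neq j$ and $a_i<a_j$ (with $i\neq k$). This guarantees the ideal is unchanged and the process terminates. Termination follows from the degree of the residual term dropping by $a_k>0$ each time. A cleaner alternative, if the iteration is messy, is to induct on $d$ directly: show that every element of $T_{a_j-a_k}$ is congruent to an element of $C_{a_j-a_k}$ modulo $(\mathcal I:f_k)_{a_j-a_k}$ intersected with the span of lower generators, using $T_d=J_d\oplus C_d$ together with $J\subset\mathcal I+(f_k)$.
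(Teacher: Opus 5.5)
Your proposal is correct and is essentially the paper's argument unrolled. The paper first proves $J_d\subset \mathcal I_d+f_kC_{d-a_k}$ by induction on $d$, using the same rewriting $f_l=(f_l-\phi_lf_k)+\phi_lf_k$ together with $T=J\oplus C$ and property (b); it then processes the generators by increasing degree and sets $\psi_j=\beta_j+f_k\sigma_j$, which is exactly what your iteration produces. One small slip: in your displayed identity the middle term should be $\sum_{i\neq k}\lambda_i f_k(f_i-\phi_if_k)$ (a factor $f_k$ is missing, as a degree count shows), but this does not affect the argument.
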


\begin{proof}
We first establish, by induction on $d$, that for $d$ we have the containment
\begin{equation}\label{eq:J-in-I-plus-C}
J_d \subset \mathcal I_d + f_kC_{d-a_k}.
\end{equation}

\emph{Base case $d= a_k-1$.} Since $d<a_k$, the ideal 
$J_{\leqslant d}$ is generated by those $f_i$ for which 
$a_i<a_k$. Similarly, $\mathcal I_{\leqslant d}$ is generated
by those $f_i-\phi_if_k$ for which $a_i<a_k$. But note that 
since $\phi_i\in T_{a_i-a_k}$ we get $\phi_i=0$. It follows
that $J_{\leqslant d}=\mathcal I_{\leqslant d}$.
Thus, \eqref{eq:J-in-I-plus-C} holds for every $d<a_k$.

\emph{Inductive step.} Suppose \eqref{eq:J-in-I-plus-C} 
holds in every degree strictly less than $d$, 
and let $d \geqslant a_k$. First,
\begin{equation}\label{eq:J-in-I-plus-T}
J_d = \sum_{l\neq k} f_lT_{d-a_l} \subset \mathcal I_d + f_kT_{d-a_k},
\end{equation}
since writing $f_l = (f_l - \phi_lf_k) + \phi_lf_k$ gives
\[
f_lT_{d-a_l} \subset \mathcal I_d + f_k\big(\phi_lT_{d-a_l}\big) \subset \mathcal I_d + f_kT_{d-a_k}.
\]
Now decompose $T_{d-a_k} = J_{d-a_k} \oplus C_{d-a_k}$, so that
$f_kT_{d-a_k} = f_kJ_{d-a_k} + f_kC_{d-a_k}$. By the inductive hypothesis applied
in degree $d-a_k < d$,we obtain
\[
J_{d-a_k} \subset \mathcal I_{d-a_k} + f_kC_{d-2a_k}.
\]
By using $f_kC_{d-2a_k} \subset C_{d-a_k}$, we get
\begin{equation}\label{EQ:f_kJsubset}
    f_kJ_{d-a_k} \subset f_k\mathcal I_{d-a_k} + f_k^2C_{d-2a_k} \subset \mathcal I_d + f_kC_{d-a_k}.
\end{equation}
Combining Equation \eqref{EQ:f_kJsubset} with
\eqref{eq:J-in-I-plus-T} gives
\begin{align*}
J_d &= \sum_{l\neq k} f_lT_{d-a_l} \\
&\subset \mathcal I_d + f_kT_{d-a_k}\\
&=\mathcal I_d + f_kJ_{d-a_k} + f_kC_{d-a_k}\\
&=\mathcal I_d + f_kC_{d-a_k}\,.
\end{align*}
Hence we obtain Equation \eqref{eq:J-in-I-plus-C} in degree $d$, completing
the induction.

Let $N=\{0,1,\ldots,n\}\setminus \{k\}$. 
Let $N_{< t}:=\{j\in N\,|\, a_j< t\}$.
For $t\geqslant 0$, suppose we have 
defined $\psi_j$ for $j\in N_{<t}$. 
Set
\[
\mathcal I^{(t)} := \Big(\{f_l-\psi_lf_k\}_{l\in N_{<t}},\
\{f_l-\phi_lf_k\}_{l\in N\setminus N_{<t}}\Big),
\]
so $\mathcal I^{(0)} = \mathcal I$. We will 
inductively construct $\psi_j \in C_{a_j-a_k}$
and show $\mathcal I^{(t)} = \mathcal I$ 
for every $t\geqslant 0$.

\emph{Base case $t=0$.} $\mathcal I^{(0)} = \mathcal I$ by definition.

\emph{Inductive step.} Suppose $\mathcal I^{(t)} = \mathcal I$ 
for some $t \geqslant 0$. If $N_{<t}=N_{<t+1}$, 
then clearly $\mathcal I^{(t+1)} = \mathcal I$ and there is nothing 
to prove. So let us assume that $N_{<t}\subsetneqq N_{<t+1}$.
We construct $\psi_j \in C_{a_j-a_k}$ for $j\in N_{<t+1}\setminus N_{<t}$
and show that $\mathcal I^{(t+1)} = \mathcal I^{(t)} =\mathcal I$.

Let $j\in N_{<t+1}\setminus N_{<t}$.
Write $\phi_j = \alpha_j + \beta_j$ with $\alpha_j \in J_{a_j-a_k}$ and
$\beta_j \in C_{a_j-a_k}$. By \eqref{eq:J-in-I-plus-C} applied in degree
$a_j-a_k$,
\[
\alpha_j = u_j + f_k\sigma_j, \qquad \text{ {for some} } u_j \in \mathcal I_{a_j-a_k},\
\sigma_j \in C_{a_j-2a_k}.
\]
By the inductive hypothesis, $\mathcal I = \mathcal I^{(t)}$, so 
$u_j\in \mathcal I^{(t)}_{a_j-a_k}$. 
Moreover, since ${\rm deg}(u_j)<a_j$,
it is a $T$-combination of generators
whose degree is strictly less than $a_j$. Note that 
as $j\in N_{<t+1}\setminus N_{<t}$, we have $a_j=t$,
and so $u_j$ is a $T$-combination of generators
of $\mathcal I^{(t)}$
whose degree is strictly less than $t$, which is precisely 
\[
\{f_l-\psi_lf_k\}_{l\in N_t}\,.
\]
Define $\psi_j := \beta_j + f_k\sigma_j$. Since $f_k\sigma_j \in
f_kC_{a_j-2a_k} \subset C_{a_j-a_k}$ and $C_{a_j-a_k}$ is a subspace,
$\psi_j \in C_{a_j-a_k}$. Moreover
\[
\phi_j = \psi_j + u_j.
\]
Thus, we may consider the generator $f_j - \phi_jf_k$
of $\mathcal I^{(t)}$ and write 
\[
f_j - \phi_jf_k = (f_j-\psi_jf_k) - u_jf_k\,.
\]
Since $u_j\in (f_l-\psi_lf_k\,|\,{l\in N_{<t}})$, 
it follows 
$$u_jf_k\in (f_l-\psi_lf_k\,|\,{l\in N_{<t}})\subset \mathcal I^{(t)}\,.$$ 
Hence $f_j-\psi_jf_k\in \mathcal I^{(t)}$. It also shows 
that in $\mathcal I^{(t)}$ we may replace the generator $f_j-\phi_jf_k$
with $f_j-\psi_jf_k$. We may do this for every $j\in N_{<t+1}\setminus N_{<t}$.
Hence
\[
\Big(\{f_l-\psi_lf_k\}_{l\in N_{<t}},\
\{f_l-\phi_lf_k\}_{l\in N\setminus N_{<t}}\Big)
=
\Big(\{f_l-\psi_lf_k\}_{l\in N_{<t+1}},\
\{f_l-\phi_lf_k\}_{l\in N\setminus N_{<t+1}}\Big)\,.
\]
Thus, $\mathcal I^{(t)}=\mathcal I^{(t+1)}$.
By induction, $\mathcal I^{(t)} = \mathcal I$ for all $t\geqslant 0$. 
Taking $t\gg 0$ gives
\[
\mathcal I = \big(f_0-\psi_0f_k,\ldots,f_{k-1}-\psi_{k-1}f_k,\ f_{k+1}-\psi_{k+1}f_k,\ldots,f_n-\psi_nf_k\big),
\]
with $\psi_i \in C_{a_i-a_k}$ for each $i$, as required.
\end{proof}

We now prove the main result of this section.
\begin{theorem}\label{Thm:VSP-main}
Let $F^\perp = (f_0, f_1, \ldots,f_n) \subseteq T$ be a complete intersection.
Let $\deg f_i = a_i$ and assume that  $a_0 \leqslant a_1\leqslant \ldots \leqslant a_n$. 
Assume that $f_i$ is non-reduced for some $i$. Let $k$ be the smallest 
such that $f_k$ is non-reduced. 
Let $f_k=h^2g$ where ${\rm deg}(h)\geqslant 1$. Assume that the subscheme 
$V(f_0,f_1,\ldots,f_{k-1})\subset \mathbb P^n$
is reduced.  
Let $J$ be the complete-intersection $(f_0,\ldots,f_{k-1},f_{k+1},\ldots,f_n)$
and let $HF(T/J,-)$ be the Hilbert function of $T/J$. Then ${\rm VSP}(F)$
is irreducible of dimension 
$$\sum_{\substack{i=0\\i\neq k}}^n HF(T/J,a_i-a_k)\,.$$
\end{theorem}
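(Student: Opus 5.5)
The plan is to realize $\mathcal V$ as the bijective image of an open subset of the affine space $\mathcal B':=\prod_{i\neq k} C_{a_i-a_k}$. Irreducibility is already established in the discussion following Theorem \ref{vspthm-unified}, since the open set $U\subset\mathcal B$ surjects onto $\mathcal V$. So the content is the dimension count. By the choice of complements we have $T_d=J_d\oplus C_d$, hence $\dim C_d=\HF(T/J,d)$, and therefore
$$\dim \mathcal B'=\sum_{i\neq k}\HF(T/J,a_i-a_k).$$

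First, I will define $U'\subset\mathcal B'$ as the set of tuples $(\psi_i)_{i\neq k}$ for which
$$\mathcal I_\psi=(f_i-\psi_if_k\,|\,i\neq k)$$
defines a reduced zero-dimensional subscheme. Then I will show that $\mathcal I_\psi$ is always a complete intersection with the Hilbert function of $T/J$. Indeed, $\mathcal I_\psi+(f_k)=F^\perp$, so the $f_i-\psi_if_k$ together with $f_k$ form a regular sequence of the same degrees. Hence $(f_i-\psi_if_k)_{i\neq k}$ is itself a regular sequence with the degrees of the generators of $J$, and so it has the same Hilbert function. In particular the subscheme has length $\HF(T/J,d)$ for $d\gg0$, which equals $\prod_{i\neq k}a_i=\rk(F)$ by Theorem \ref{thm2}.

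Next, the assignment $\psi\mapsto V(\mathcal I_\psi)$ gives a morphism $\mathcal B'\to {\rm Hilb}_{\rk(F)}(\mathbb P^n)$, because the family over $\mathcal B'$ is flat, having constant Hilbert function. Since $V(f_k)$ meets $V(\mathcal I_\psi)$ in nothing, $\mathcal I_\psi$ is saturated; $f_k$ is then a nonzerodivisor by the regular sequence property, so $\mathcal I_\psi$ is the saturated ideal of its scheme. The image of $U'$ lies in $\mathcal V$ because $\mathcal I_\psi\subset F^\perp$ and the length is $\rk(F)$. The map is surjective onto $\mathcal V$ by Theorem \ref{vspthm-unified} combined with Lemma \ref{lem:existence-reduced-form}, which lets us replace the $\phi_i$ by $\psi_i\in C_{a_i-a_k}$. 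It is injective because equal points of the Hilbert scheme have equal saturated ideals, and Lemma \ref{unique in C_i} then forces $\psi=\psi'$. Finally $U'$ is open, since reducedness is an open condition in flat families; it is nonempty because $\mathcal V\neq\emptyset$.

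A bijective morphism from the irreducible variety $U'$ onto $\mathcal V$ preserves dimension: by generic fibre dimension it has finite fibres. Hence
$$\dim{\rm VSP}(F)=\dim\overline{\mathcal V}=\dim U'=\dim\mathcal B'=\sum_{i\neq k}\HF(T/J,a_i-a_k).$$
The main obstacle is the middle step: justifying that the map to the Hilbert scheme is a morphism and that the ideal $\mathcal I_\psi$ is recovered from the point, that is, saturatedness. If this causes difficulty, I would restrict to $U'$ and use directly that for reduced $\mathbb X$ the ideal $I(\mathbb X)$ coincides with $\mathcal I_\psi$, which follows from Theorem \ref{vspthm-unified}'s Hilbert function argument.
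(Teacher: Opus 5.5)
Your proposal is correct and is essentially the paper's proof. You parametrize $\mathcal V$ bijectively by the open set of reduced ideals in $\mathcal B'=\prod_{i\neq k}C_{a_i-a_k}$, get surjectivity from Theorem \ref{vspthm-unified} with Lemma \ref{lem:existence-reduced-form} and injectivity from Lemma \ref{unique in C_i}, and conclude $\dim{\rm VSP}(F)=\dim\mathcal B'=\sum_{i\neq k}\HF(T/J,a_i-a_k)$. Beyond this, you supply details the paper leaves implicit: flatness of the family, so that one really has a morphism to ${\rm Hilb}_{\rk(F)}(\mathbb P^n)$; saturatedness of $\mathcal I_\psi$, which follows from $f_k$ being a nonzerodivisor modulo $\mathcal I_\psi$ rather than from $V(f_k)\cap V(\mathcal I_\psi)=\emptyset$ alone; openness and nonemptiness of $U'$; and $\dim C_d=\HF(T/J,d)$.
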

\begin{proof}
We argue as in the proof of \cite[Proposition 28]{BBT13}.
Let 
$$\mathcal B':=\prod_{\substack{i=0\\i\neq k}}^n C_{a_i-a_k}\,.$$
Consider the open subset $U\subset \mathcal B'$ 
consisting of elements $(\psi_{0},\ldots, \psi_{k-1},\psi_{k+1},\ldots,\psi_n)$
such that the ideal 
$\mathcal I=(f_0-\psi_{0}f_k,\ldots,f_{k-1}-\psi_{k-1}f_k,f_{k+1}-\psi_{k+1}f_k,\ldots, f_n-\psi_nf_k)$
defines a reduced subscheme of $\mathbb P^n$. 
By Theorem \ref{vspthm-unified} and Lemma \ref{lem:existence-reduced-form}
it follows that $U$ maps surjectively onto $\mathcal V$.
By Lemma \ref{unique in C_i} it follows that this map $U\to \mathcal V$
is a bijection. Thus, ${\rm VSP}(F)$ is irreducible of 
dimension equal to ${\rm dim}(U)={\rm dim}(\mathcal B')$.
\end{proof}

The proposition below gives a sufficient 
condition for the highest degree generator of 
$F^{\perp}$ to be non-reduced.
    \begin{proposition}\label{prop a_n large}
        Let $(f_0,\ldots,f_n)\subset T$ be a complete intersection with $f_i$ 
        homogeneous polynomials. Let $a_i={\rm deg}(f_i)$. If 
        $a_n \geqslant \left(\sum\nolimits_{i=0}^{n-1}a_i\right)-n+2$,
        then we can find a linear form $\ell$ and a form $g$ 
        of degree $a_n-2$ such that $(f_0,\ldots,f_n)=(f_0,\ldots,f_{n-1},\ell^2g)$.
    \end{proposition}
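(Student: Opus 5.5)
The idea is to use the socle degree. Let $A=T/(f_0,\ldots,f_n)$; this is an Artinian complete intersection with socle degree $s=\sum_{i=0}^n a_i-(n+1)$. Every form of degree $>s$ lies in the ideal. Let $J'=(f_0,\ldots,f_{n-1})$, so $B=T/J'$ is a one-dimensional complete intersection. The plan is to find a linear form $\ell$ and a form $g$ of degree $a_n-2$ such that
\[
\ell^2g\equiv f_n \pmod{J'}\,,
\]
up to a nonzero scalar, and such that $(f_0,\ldots,f_{n-1},\ell^2g)$ is still the same ideal.

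The key point is to work in $B$ in degree $a_n$. The Hilbert function of $B$ is nondecreasing and stabilizes at $e=\prod_{i<n}a_i$. It reaches this value from degree $\sigma=\sum_{i<n}(a_i-1)$ on, which is the regularity index of $B$. By hypothesis,
\[
a_n-1\geqslant \sum_{i<n}a_i-n+1=\sigma+1\,.
\]
So $a_n-2\geqslant \sigma$, and in particular $a_n-1\geqslant\sigma$.

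Choose a linear form $\ell$ that is a nonzerodivisor on $B$. This is possible since $B$ is Cohen--Macaulay of dimension one, so a general $\ell$ avoids the associated primes. Multiplication by $\ell$ is injective $B_{j}\to B_{j+1}$. For $j\geqslant\sigma$ both spaces have dimension $e$, so the map is bijective. Hence multiplication by $\ell^2$ is a bijection $B_{a_n-2}\to B_{a_n}$, using $a_n-2\geqslant\sigma$. Therefore there is $g\in T_{a_n-2}$ with $\ell^2g\equiv f_n \pmod{J'}$, that is, $f_n=\ell^2g+\sum_{i<n}c_if_i$ for some forms $c_i$. This gives $(f_0,\ldots,f_n)=(f_0,\ldots,f_{n-1},\ell^2g)$ directly.

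The main obstacle is justifying the regularity of $B$: that $\HF(B,j)=e$ for all $j\geqslant\sigma$, not just for $j\gg0$. To establish it, I would compute the Hilbert series $\prod_{i<n}(1-t^{a_i})/(1-t)^{n+1}$. Modding out by a general linear nonzerodivisor $\ell'$ gives an Artinian complete intersection whose Hilbert function vanishes above $\sigma$. Since $\HF(B,j)-\HF(B,j-1)=\HF(B/\ell'B,j)$, we get $\HF(B,j)=\HF(B,j-1)$ for $j>\sigma$. This means the function is constant from degree $\sigma$ on, equal to $e$.

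We use $a_n-2\geqslant\sigma$; this is exactly the hypothesis, since $a_n\geqslant \sum_{i<n}a_i-n+2$ is equivalent to $a_n-2\geqslant\sigma$.
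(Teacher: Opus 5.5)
Your proof is correct, and it takes a more elementary route than the paper's.

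\textbf{The paper's route.} The paper argues sheaf-theoretically. From the Koszul resolution it shows that the ideal sheaf $\tilde I$ of $Z=V(f_0,\ldots,f_{n-1})$ is $(\sigma+1)$-regular. Hence $T_b\to H^0(\mathcal O_Z(b))$ is surjective with kernel $(f_0,\ldots,f_{n-1})_b$ for $b\geqslant\sigma$. It then takes $\ell$ not vanishing on the support of $Z$, so $\ell$ is a unit on $\mathcal O_Z$ and $\ell^2\colon H^0(\mathcal O_Z(b-2))\to H^0(\mathcal O_Z(b))$ is an isomorphism. A commutative square then lifts $f_n$.

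\textbf{Your route.} You replace all of this by working in the graded ring $B=T/J'$. The identity $(1-t)H(B,t)=\prod_{i<n}(1+t+\cdots+t^{a_i-1})$ gives $\HF(B,j)=e$ for $j\geqslant\sigma$. Injectivity of multiplication by a linear nonzerodivisor, together with a dimension count, then makes $\ell^2\colon B_{a_n-2}\to B_{a_n}$ bijective.

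\textbf{Comparison.} Both arguments rest on the same number $\sigma=\sum_{i<n}a_i-n$: in the paper it is the Castelnuovo--Mumford regularity of $\mathcal O_Z$, in yours the degree of the $h$-polynomial of $B$. They also make the same choice of $\ell$, since not vanishing at the points of $Z$ is the same as being a nonzerodivisor on $B$ ($B$ is Cohen--Macaulay, so it has no embedded primes).

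What your version buys is self-containment. It needs no sheaf cohomology or regularity theory. Because you lift $f_n$ directly in $B_{a_n}$, you also never have to identify $B_b$ with $H^0(\mathcal O_Z(b))$, i.e.\ to use that $J'$ is saturated. The paper's version makes the geometric meaning explicit: the bound is the regularity of the ideal sheaf of the point scheme $Z$. It would therefore adapt directly to any zero-dimensional $Z$ with a known regularity bound.

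Your opening remarks about the socle degree of $T/(f_0,\ldots,f_n)$ and ``up to a nonzero scalar'' are never used and can be dropped.
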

    \begin{proof}
        Let $Z$ denote the subscheme of $\mathbb P^n$ defined by the 
        homogeneous ideal $(f_0,\ldots,f_{n-1})$. Then $Z$ is zero-dimensional.
        Consider the following exact sequence of sheaves on $\mathbb P^n$, which 
        is part of the Koszul resolution of $\mathcal O_{Z}$ (after twisting by $b$):
        $$\bigoplus_{i=0}^{n-1}\mathcal O_{\mathbb P^n}(b-a_i)\to 
            \mathcal O_{\mathbb P^n}(b)\to \mathcal O_{Z}(b)\to 0\,$$
        corresponding to the regular sequence $(f_0,\ldots,f_{n-1})$.
        Let $\tilde I$ denote the ideal sheaf of the scheme $Z$. Then $\tilde I(b)$
        is the image of the leftmost sheaf in the above sequence. 

        If $0\to \mathcal F' \to \mathcal F \to \mathcal F''\to 0$
        is a short exact sequence of sheaves, $\mathcal F'$ is $(m+1)$-regular
        and $\mathcal F$ is $m$-regular, then $\mathcal F''$ is $m$-regular. 
        The sheaf $\mathcal O(-m)$ is $m$-regular. Using the
        Koszul resolution of $\tilde I$, it easily follows that $\tilde I$
        is $\left(\left(\sum\nolimits_{i=0}^{n-1}a_i\right)-n+1\right)$-regular.
        It follows that for all $b\geqslant \left(\sum\nolimits_{i=0}^{n-1}a_i\right)-n$
        we have an exact sequence of vector spaces  
        \begin{equation}\label{e1:prop a_n large}
        H^0\left(\mathbb P^n,\bigoplus_{i=0}^{n-1}\mathcal O_{\mathbb P^n}(b-a_i)\right)\to H^0(\mathbb P^n,\mathcal O_{\mathbb P^n}(b))\to 
            H^0(\mathbb P^n,\mathcal O_{Z}(b))\to 0\,.    
        \end{equation}
        Let $\ell$ denote a general linear form which does not vanish
        at any point in the support of $Z$. Then 
        $$H^0(\mathbb P^n,\mathcal O_Z(b-2)) \xrightarrow{\ell^2} H^0(\mathbb P^n,\mathcal O_Z(b))$$
        is an isomorphism. Assume $b\geqslant \left(\sum\nolimits_{i=0}^{n-1}a_i\right)-n+2$.
        Then from the above discussion, we have the following commutative square
        in which the vertical arrows are surjections:
        \begin{center}
        \begin{tikzcd}
            H^0(\mathbb P^n, \mathcal O_{\mathbb P^n}(b-2)) \arrow[r,"\ell^2"]\arrow[d, two heads]& H^0(\mathbb P^n, \mathcal O_{\mathbb P^n}(b))\phantom{\,.}\arrow[d,two heads]\\
            H^0(\mathbb P^n, \mathcal O_Z(b-2)) \arrow[r,"\ell^2","\sim"']&H^0(\mathbb P^n, \mathcal O_Z(b))\,.
        \end{tikzcd}
        \end{center}
        Thus, if 
        $b\geqslant \left(\sum\nolimits_{i=0}^{n-1}a_i\right)-n+2$ then the 
        composite map 
        $$H^0(\mathbb P^n,\mathcal O_{\mathbb P^n}(b-2))\xrightarrow{\cdot \ell^2}
            H^0(\mathbb P^n,\mathcal O_{\mathbb P^n}(b))\to 
            H^0(\mathbb P^n,\mathcal O_{Z}(b))$$
        is surjective. 
        Let $f_n$ be any form of degree $a_n\geqslant \left(\sum\nolimits_{i=0}^{n-1}a_i\right)-n+2$. 
        It follows that there is a form $g$ of degree $a_n-2$
        such that $f_n-\ell^2g$ is in the kernel of the map 
        $$H^0(\mathbb P^n,\mathcal O_{\mathbb P^n}(b))\to 
            H^0(\mathbb P^n,\mathcal O_{Z}(b))\,.$$
        From \eqref{e1:prop a_n large} it follows that 
        $f_n-\ell^2g\in (f_0,f_1,\ldots,f_{n-1})$. Thus, 
        $(f_0,\ldots,f_{n})=(f_0,\ldots,f_{n-1},\ell^2g)$.
    \end{proof}

 In Proposition \ref{prop a_n large} if $V(f_0, \ldots, f_{n-1})$ 
 is reduced, we obtain the following result as a consequence 
 of Theorem \ref{Thm:VSP-main}.
\begin{corollary}\label{show fn is non-reduced}
Let $F^\perp=(f_0,f_1,\ldots,f_n)$ be a complete intersection. 
Let $a_i={\rm deg}(f_i)$ and assume that $a_0\leqslant a_1\leqslant \ldots\leqslant a_n$. 
Further assume that $a_n\geqslant \left(\sum\nolimits_{i=0}^{n-1}a_i\right)-n+2$
and that the subscheme $V(f_0,f_1,\ldots,f_{n-1})\subset \mathbb P^n$
is reduced. 
Let $J$ be the complete-intersection $(f_0,\ldots,f_{n-1})$
and let $HF(T/J,-)$ be the Hilbert function of $T/J$. Then ${\rm VSP}(F)$
is irreducible of dimension 
$$\sum_{\substack{i=0}}^{n-1} HF(T/J,a_i-a_n)\,.$$    
\end{corollary}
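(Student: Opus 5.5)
The plan is to reduce Corollary \ref{show fn is non-reduced} to Theorem \ref{Thm:VSP-main} with $k=n$. Proposition \ref{prop a_n large} provides the needed change of generators.

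\textbf{Step 1: a non-reduced top generator.} By Proposition \ref{prop a_n large}, the inequality $a_n\geqslant \left(\sum\nolimits_{i=0}^{n-1}a_i\right)-n+2$ gives a linear form $\ell$ and a form $g$ of degree $a_n-2$ with
$$F^\perp=(f_0,\ldots,f_{n-1},f_n'),\qquad f_n':=\ell^2g.$$
We have $f_n-f_n'\in J=(f_0,\ldots,f_{n-1})$ and $\deg f_n'=a_n$. Hence $(f_0,\ldots,f_{n-1},f_n')$ is again a complete intersection with the same degrees, and $f_n'$ is non-reduced.

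\textbf{Step 2: identify the index $k$.} Let $k$ be the smallest index such that $f_k$ is non-reduced in the new generating set. If $k=n$, Theorem \ref{Thm:VSP-main} applies directly. Its hypotheses hold: $V(f_0,\ldots,f_{n-1})$ is reduced by assumption, and the ideal $J$ in that theorem is exactly $(f_0,\ldots,f_{n-1})$. The conclusion is then that ${\rm VSP}(F)$ is irreducible of dimension $\sum_{i=0}^{n-1}HF(T/J,a_i-a_n)$, as claimed.

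\textbf{Step 3: the main obstacle, ruling out $k<n$.} Suppose some $f_k$ with $k<n$ were non-reduced, say $f_k=h^2g'$ with $\deg h\geqslant 1$. The hypersurface $V(f_k)$ then contains $V(h)$ with multiplicity at least two. Since $h$ is a nonzerodivisor modulo the other $f_j$ with $j\neq k$, $j<n$, the hypersurface $V(h)$ meets $V(f_j\,:\,j<n,\ j\neq k)$ in a nonempty finite set. This uses that $V(f_j\,:\,j<n,\ j\neq k)$ is a curve in $\mathbb P^n$, so it meets every hypersurface. At any such intersection point $p$, the local ring of $Z=V(f_0,\ldots,f_{n-1})$ contains the quotient by $h^2$ of the local ring of that curve at $p$, which is non-reduced. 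This contradicts the reducedness of $Z$. So every $f_i$ with $i<n$ is reduced, and hence $k=n$.

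The argument in Step 3 is the step I expect to need the most care. The point is to check that the curve $V(f_j\,:\,j<n,\ j\neq k)$ is locally a complete intersection curve, so that $h$ cuts it with a local equation whose square gives nilpotents. If this local argument becomes delicate, an alternative is available. Theorem \ref{Thm:VSP-main} works for \emph{any} non-reduced $f_k$ for which $V(f_0,\ldots,f_{k-1})$ is reduced, and the reducedness of $Z$ is inherited by its defining subsets only through this argument. So one could simply observe that the minimal $k$ is forced to be $n$ by the hypothesis, which is what the paper's phrasing suggests.
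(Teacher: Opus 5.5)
Your route is the paper's: use Proposition~\ref{prop a_n large} to replace $f_n$ by $\ell^2g$, note that reducedness of $Z=V(f_0,\ldots,f_{n-1})$ forces $f_0,\ldots,f_{n-1}$ to be reduced so that $k=n$, and then apply Theorem~\ref{Thm:VSP-main} with $J=(f_0,\ldots,f_{n-1})$. The paper simply asserts the middle step, and your Step~3 tries to justify it.

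The idea in Step~3 is right: take a point $p\in V(h)\cap W$, where $W=V(f_j : j<n,\ j\neq k)$. However, the local inference as written is backwards. $\mathcal O_{W,p}/(h^2)$ is a \emph{quotient} of $\mathcal O_{Z,p}=\mathcal O_{W,p}/(h^2g')$, not a subring, and a reduced ring can have non-reduced quotients.

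One correct finish goes as follows. If $\mathcal O_{Z,p}$ were reduced, it would be a field, since it is Artinian local. Then $h^2g'$ would generate the maximal ideal of $\mathcal O_{W,p}$, which is one-dimensional because $W$ is a complete intersection curve and hence equidimensional. So $\mathcal O_{W,p}$ would be a DVR with uniformizer $h^2g'$. This is impossible, since $h$ vanishes at $p$ and so $h^2g'$ has valuation at least $2$.

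Equivalently, $hg'$ is a nonzero nilpotent in $\mathcal O_{Z,p}$. Its square lies in $(h^2g')$. It is nonzero because $hg'\in(h^2g')$ would give $hg'(1-uh)=0$ with $1-uh$ a unit. That contradicts $hg'$ being a nonzerodivisor on $\mathcal O_{W,p}$, which holds because $f_k=h\cdot hg'$ is regular modulo the other $f_j$.

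Your fallback paragraph only restates the desired conclusion and should be dropped.
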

\begin{proof}
    Using Proposition \ref{prop a_n large} it follows that 
    $f_n=\ell^2g$ for some linear form $\ell$. As the subscheme 
    $V(f_0,f_1,\ldots,f_{n-1})\subset \mathbb P^n$ is reduced,
    we obtain that all the $f_i$ are reduced for $0\leqslant i\leqslant n-1$.
    Now we apply Theorem \ref{Thm:VSP-main} to obtain the result. 
\end{proof}

\begin{proof}[Proofs of Theorem \ref{theorem-intro1} and Theorem \ref{theorem-intro2}]
    Theorem \ref{theorem-intro1} follows from Theorem \ref{vspthm-unified}
    and Theorem \ref{Thm:VSP-main}. Theorem \ref{theorem-intro2}
    follows from Theorem \ref{vspthm-unified} and Corollary \ref{show fn is non-reduced}.
\end{proof}

\section{Final remarks}
If $F$ is a homogeneous form such that $F^{\perp}$ is a 
complete intersection, then it may not be true that every 
minimal apolar ideal is a complete intersection ideal. In fact, 
the following example shows that there exists a form $F$ 
such that  $F^{\perp}$ is a complete intersection but no 
minimal apolar ideal is a complete intersection.

\begin{example}\label{counterexample}
Let $I= (Y^2-Z^2, XY, X^5-Z^5) \subset T=\mathbb{C}[X,Y,Z]$. 
Note that $I$ is an Artinian complete intersection ideal. By Macaulay2 \cite{M2}
we obtain that for $F = (x^5+y^5)z+y^3z^3+yz^5$, we have $F^{\perp} = I$.
We claim that any minimal 
apolar ideal of $F$ is not a complete intersection.
\begin{proof}
In \cite[Example 6.1]{BCMS26} it is proved that $4 < \rk (F) \leqslant 8$.
We now show that any minimal apolar 
ideal of $F$ is not a complete intersection. 
We prove this by contradiction. Suppose there 
exists a minimal apolar ideal 
$I(\mathbb{X}) \subset F^{\perp}$ such that 
$I(\mathbb{X}) = (g_1, g_2)$ is a complete 
intersection ideal. Since $I(\mathbb X)\subset F^\perp$,
$\deg (g_i)\geqslant 2$.
Without loss of generality assume 
$\deg (g_1) \leqslant \deg (g_2)$.
Since
$\deg (g_1) \cdot \deg (g_2) = |\mathbb{X}|=\rk(F)$, 
possible degrees for $g_1, g_2$ are $(2,3), (2,4)$. 
But any such ideal 
$I(\mathbb{X}) \subset (XY, Y^2-Z^2)$ since 
$(F^{\perp})_i =(XY, Y^2-Z^2)_i$ for 
$2 \leqslant i \leqslant 4$. 
This implies $V(XY, Y^2-Z^2)\subset \mathbb X$ 
which is a contradiction as $\mathbb X$ is a reduced zero
dimensional scheme whereas $V(XY, Y^2-Z^2)$
is not reduced. Hence, in this case, no minimal 
apolar ideal of $F$ is a complete intersection.
\end{proof}
\end{example}

\bibliography{main}
\bibliographystyle{halpha}

\end{document}